\numberwithin{equation}{section}
\newcommand*{\medcap}{\mathbin{\scalebox{1.5}{\ensuremath{\cap}}}}
\newcommand*{\medcup}{\mathbin{\scalebox{1.3}{\ensuremath{\cup}}}}
\newtheorem{theorem}{Theorem}[section]
\newtheorem*{thma}{Theorem A}
\newtheorem{corollary}{Corollary}[section]
\newtheorem{proposition}{Proposition}[section]
\theoremstyle{remark}
\newtheorem{remark}{Remark}[section]
\newtheorem*{ack}{Acknowledgement}
\def\re{\operatorname{Re}}
\def\Im{\operatorname{Im}}
\def\area{\operatorname{area}}
\def\length{\operatorname{Length}}
\def\dens{\operatorname{dens}}
\def\diam{\operatorname{diam}}
\def\dim{\operatorname{dim}}
\def\hc{\operatorname{\widehat{\mathbb C}}}
\def\c{\operatorname{\mathbb C}}
\def\ud{\operatorname{\mathbb D}}
\def\dim{\operatorname{dim}}
\def\I{\operatorname{\mathcal{I}}}
\def\s{\operatorname{\mathcal{S}}}
\begin{document}
\title{Hausdorff dimension of escaping sets of meromorphic functions II}
\author{Magnus Aspenberg and Weiwei Cui}
\date{}
\maketitle

\begin{abstract}
	A function which is transcendental and meromorphic in the plane has at least two singular values. On one hand, if a meromorphic function has exactly two singular values, it is known that the Hausdorff dimension of the escaping set can only be either $2$ or $1/2$. On the other hand, the Hausdorff dimension of escaping sets of Speiser functions can attain every number in $[0,2]$ (cf. \cite{ac1}). In this paper, we show that number of singular values which is needed to attain every Hausdorff dimension of escaping sets is not more than $4$.

\medskip
\noindent\emph{2020 Mathematics Subject Classification}: 37F10, 30D05 (primary), 37F31, 30D30 (secondary).

\medskip
\noindent\emph{Keywords}: Meromorphic functions, singular values, Speiser functions, escaping sets, quasiconformal surgery.
\end{abstract}

\section{Introduction and main results}

In this paper we study the dynamics of transcendental meromorphic functions. It is well known that many dynamical behaviours of the function depend, to certain extent, on the iterative behaviours of its singular values. By definition, $a\in\hc$ is a singular value of a meromorphic function $f$, if it is either a critical or asymptotic value of $f$. 


Much recent attention in transcendental dynamics is directed to the so-called \emph{Speiser class} $\s$, consisting of meromorphic functions with a finite number of singular values;  see, for instance, \cite{eremenko2, goldberg4, bishop1, bishop5}. These functions are called \emph{Speiser functions}.  Many familiar functions belong to this class, including, for example, the exponential family, the cosine family and also the tangent family. Speiser functions are studied in great details and provide dynamical behaviours similar to those of polynomial and rational maps. 

\smallskip
We intend to study functions in the Speiser class with only few singular values. Even with this restriction, the classes of functions are quite diverse. See discussion in the next section. One can thus reasonably expect that very rich and diverse dynamical behaviours could occur in this case. Our main result will confirm this in a strong sense. Recall that for a meromorphic function $f$, the \emph{escaping set} $\I(f)$ of $f$ is the set of points which tend to $\infty$ under iteration. This set plays a fundamental role in recent studies of transcendental dynamics. Starting from McMullen \cite{mcmullen11}, a wide range of research focuses on estimating the Hausdorff dimensions of escaping sets; see, for instance, \cite{baranski1, bergweiler10, rempe11, schubert} for some entire functions and \cite{bergweiler2, cuiwei2, kotus2} for certain meromorphic functions. Some of these papers also treat special Speiser functions. A natural question arises: What are the possible values of the Hausdorff dimensions of escaping sets for Speiser functions? This is resolved recently by the present authors in \cite{ac1}: Any number in $[0,2]$ can be achieved. In a larger setting (i.e., those meromorphic functions with a bounded set of finite singular values), Bergweiler and Kotus proved a similar result \cite{bergweiler2}.

\smallskip

This paper is a natural continuation of \cite{ac1} and will be focused on exploring a relation between the number of singular values and possible attainable Hausdorff dimensions for escaping sets. For convenience, we will use $\s_q$ to denote Speiser functions with exactly $q$ singular values on $\hc$. 

Our starting point is the following theorem, which collects results of several authors; see \cite{mcmullen11, kotus2} (and also \cite[Theorem 1]{cuiwei2}). By $\dim E$ we mean the Hausdorff dimension of the set $E$. 

\begin{thma}\label{pr2}
	$$\left\{\dim\I(f):\,f\in \s_2\right\}=\left\{\,1/2,\, 2\,\right\}.$$
\end{thma}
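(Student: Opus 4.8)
The plan is to reduce everything to the structure of functions with two singular values and then read off the two possible dimensions from known computations. First I would recall the classical classification: a transcendental meromorphic function with exactly two singular values is, after an affine conjugation, of the form $f=\mu\circ\exp$ for some Möbius transformation $\mu$; the two singular values are then the logarithmic asymptotic values $\mu(0)$ and $\mu(\infty)$, and $f$ has no critical points. (This is the statement that a two-singular-value function is forced to have the Speiser graph of $\exp$, and it will be recalled in the next section.) Since affine conjugation preserves $\dim\I$, it suffices to treat $f=\mu\circ\exp$, and I would split into two cases according to whether the pole $\mu^{-1}(\infty)$ of $\mu$ lies in $\{0,\infty\}$ — equivalently, whether $\infty$ is an asymptotic value of $f$.

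\smallskip\noindent\textbf{Case A: $\mu^{-1}(\infty)\in\{0,\infty\}$.} Then $\mu$ is affine in $w$ or in $1/w$ and $f$ is conjugate to $E_\lambda(z)=\lambda e^z$ for some $\lambda\ne0$. Here $\I(E_\lambda)=\{z:\re E_\lambda^{\,n}(z)\to+\infty\}$, and I would invoke McMullen's theorem \cite{mcmullen11} that this set has Hausdorff dimension $2$. The mechanism is the logarithmic tract over $\infty$: iterating through horizontal half-strips mapped univalently onto half-planes, one produces a nested family of small squares on which $\exp$ has negligible distortion, keeps in each of them a subset of density tending to $1$, and applies McMullen's density lemma to conclude that the surviving set has dimension $2$.

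\smallskip\noindent\textbf{Case B: $q:=\mu^{-1}(\infty)\notin\{0,\infty\}$.} Now $f$ has infinitely many simple poles, at $\log q+2\pi i\mathbb{Z}$, evenly spaced along a vertical line and all with residues of the same modulus, while $\infty$ is neither asymptotic nor critical; $\lambda\tan z$ is a representative. An orbit can thus escape only by approaching poles ever more closely: for large $R$ the set $A_R:=\{|f|>R\}$ is a disjoint union of disks $D_k$ of radius $\asymp 1/R$ about the poles $p_k$, each mapped univalently by $f$ onto $\{|w|>R\}$. For the upper bound, write $\I(f)\subseteq\bigcup_{N\ge0} f^{-N}(\Lambda_R)$ with $\Lambda_R=\{z:f^n(z)\in A_R\ \text{for all}\ n\ge0\}$; as $f^{-N}(\Lambda_R)$ is a countable union of conformal images of subsets of $\Lambda_R$, this gives $\dim\I(f)\le\dim\Lambda_R$. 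But $\Lambda_R$ is the limit set of an infinite conformal iterated function system whose inverse branches have derivative $\asymp|p_j|^{-2}$ on $D_j$, so by Bowen's formula $\dim\Lambda_R$ equals the root $s^*(R)$ of a pressure equation which, up to bounded distortion, reads $\sum_{|k|\gtrsim R}|\res(f,p_k)|^{s}|p_k|^{-2s}=1$. Since $\sum_k|p_k|^{-2s}<\infty$ exactly for $s>1/2$, one gets $s^*(R)\downarrow\tfrac12$ as $R\to\infty$, hence $\dim\I(f)\le\tfrac12$. For the matching lower bound, fix any slowly growing $R_n\to\infty$ and form the Cantor set $X$ of points whose $n$-th iterate lies in a disk about a pole $p_k$ with $|p_k|\in(R_n,2R_n)$; then $X\subseteq\I(f)$, and balancing the $\asymp R_n$ admissible choices at step $n$ against the contraction $\asymp R_n^{-2}$ yields $\dim X=\lim\big(\sum_{i\le n}\log R_i\big)\big/\big(2\sum_{i\le n}\log R_i\big)=\tfrac12$ (a mass-distribution argument makes this rigorous, using that sibling cylinders are separated by at least a fixed multiple of their diameters). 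Hence $\dim\I(f)=\tfrac12$ in Case B; this is the content of Kotus \cite{kotus2} and of \cite[Theorem~1]{cuiwei2}. Conceptually, $\tfrac12$ is half the exponent of convergence ($=1$) of the pole sequence, the halving coming from the fact that near a simple pole $f$ behaves like $w\mapsto c/w$, which squares scales.

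\smallskip
Since Cases A and B are exhaustive and each of them occurs, $\{\dim\I(f):f\in\s_2\}=\{1/2,2\}$. The main obstacle is not any single estimate but the organisation: the clean route rests on the classification of $\s_2$ (classical; a two-singular-value function must realise the Speiser graph of $\exp$), while the genuinely analytic content sits in Case B, where the \emph{exact} value $1/2$ requires a distortion-controlled infinite-IFS / thermodynamic-formalism analysis for the upper bound, uniform in $R$, together with a sharp non-autonomous Cantor construction for the lower bound — both already carried out in the cited papers, so that for the present theorem essentially only the assembly remains.
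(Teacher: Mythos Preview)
Your proposal is correct and follows exactly the route the paper takes: classify $\s_2$ via Theorem~\ref{s2} as M\"obius postcompositions of $\exp$, split according to whether $\infty$ is an asymptotic value, and then invoke \cite{mcmullen11} for the exponential family and \cite{kotus2,cuiwei2} for the finite-asymptotic-value case. The paper itself does not prove Theorem~A beyond this assembly (see the paragraph after Theorem~\ref{s2}); your sketch of the IFS/pressure mechanism in Case~B supplies more detail than the paper gives, but the architecture is identical.
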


Meromorphic functions in $\s_2$ have explicit formulas; see Theorem \ref{s2} in the next section for a simple proof. As the number of singular values increases, the varieties of functions are also increasing. Thus one can reasonably expect that a more flexible result would hold. This is indeed the case, as shown by the following result.

\begin{theorem}\label{main}
	$$\left\{\dim\I(f):\,f\in \s_4\right\}=[0,2].$$
\end{theorem}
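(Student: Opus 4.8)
The plan is to realize every value $d \in [0,2]$ as $\dim \I(f)$ for some $f \in \s_4$. The lower bound $\dim \I(f) \geq 0$ is vacuous and $\dim \I(f) \leq 2$ is automatic, so the content is producing, for each prescribed $d$, a Speiser function with exactly four singular values whose escaping set has Hausdorff dimension exactly $d$. Since \cite{ac1} already achieves every $d \in [0,2]$ using Speiser functions with an unspecified (finite) number of singular values, the natural strategy is to revisit that construction and show it can be carried out — or modified — so that the singular value count is pinned down to $4$. I would start by recalling the model from \cite{ac1}: one builds, via quasiconformal surgery, a function that behaves like a well-understood ``core'' map (governing the escaping dynamics and hence the dimension) glued to pieces that control the tract geometry. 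The number of singular values in such a construction is typically the number of asymptotic/critical values coming from the logarithmic tracts plus whatever is introduced by the gluing. The key realization is that two of the four singular values can be reserved to produce a single attracting or parabolic basin that ``absorbs'' all the flexibility-producing surgery, while the remaining two govern the tract(s) responsible for the escaping set.

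The key steps, in order, would be: (1) fix a target dimension $d \in [0,2]$ and take from \cite{ac1} (or from the Bergweiler–Kotus construction \cite{bergweiler2}) the quasiregular model $g$ whose escaping set has dimension $d$, paying attention to the exact list of singular values it carries; (2) modify the model so that all critical values can be coalesced — by choosing the tract maps to be, e.g., exponential-type so they contribute only asymptotic values, and arranging the inter-tract region to be a single quasidisk mapped with a controlled number of critical points; (3) apply the measurable Riemann mapping theorem to straighten the model to a genuine meromorphic $f$, using that quasiconformal conjugacy preserves the escaping set up to the relevant geometry and, crucially, preserves the dimension computation because the dilatation is supported away from the dynamically essential parts (this is the standard mechanism in \cite{ac1}); (4) verify $f \in \s_4$, i.e. count singular values on $\hc$ — typically $\infty$ is an asymptotic value (it is an omitted value / essential singularity data for meromorphic tracts), one or two finite asymptotic values come from the tracts, and the leftover budget is filled by critical values, all arranged to total exactly $4$; (5) confirm $\dim\I(f) = d$ by transporting the dimension estimates (McMullen-type lower bounds via the tract expansion, and upper bounds via a Hausdorff-measure covering of the escaping set) through the surgery. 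The edge cases $d = 0$ and $d = 2$ might need separate, easier constructions (e.g. for $d=2$ one can use a function with a sufficiently ``fat'' logarithmic tract, cf.\ the $\dim \I(f) = 2$ half of Theorem A; for $d = 0$ one wants extremely thin tracts), and one should check these also live in $\s_4$.

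The main obstacle I anticipate is step (4) combined with step (5): controlling the singular value count \emph{exactly} at $4$ while simultaneously retaining enough geometric freedom in the tracts to sweep out the full interval $[0,2]$ of dimensions. With only four singular values there is very little room — two are essentially spent on making the surgery admissible (closing up the quasiregular map into something whose complex structure can be straightened without creating extra critical values), leaving only two to shape the tract(s), and one must check that a one- or two-parameter family of such tract geometries genuinely realizes all dimensions, not just a discrete set or a subinterval. A secondary difficulty is ensuring the quasiconformal deformation does not perturb the Hausdorff dimension of the escaping set: one needs the dilatation to be supported on a set that is either disjoint from $\I(f)$ or visited with controlled frequency, so that the distortion estimates in the McMullen-style argument survive; this is where the careful placement of the support of the Beltrami coefficient — away from the tracts over a neighborhood of $\infty$ — becomes essential, and it is the technical heart that \cite{ac1} presumably already handles and that must be re-checked under the tighter singular-value constraint.
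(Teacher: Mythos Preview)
Your proposal misidentifies the mechanism, and the concrete steps you sketch would not lead to the paper's proof. The functions in the paper have \emph{no} asymptotic values and \emph{no} logarithmic tracts; $\infty$ is not omitted but is a critical value attained at infinitely many double poles. The core object is a Weierstra{\ss} elliptic function $\wp$, which comes equipped with exactly four critical values $e_1,e_2,e_3,\infty$ --- this is where the count of four is pinned down for free, not by budgeting two values for an attracting basin and two for tracts. One then precomposes $\wp$ with a power map $z\mapsto z^{\alpha/2\pi}$ into a sector $\c_\alpha$ and a spiral map $z\mapsto z^{\mu}$ (with $\mu$ chosen so the period lattice matches up across the slit); a quasiconformal correction $\Phi$ removes the residual discontinuity, and straightening yields $f\in\s_4$. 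The parameters $\alpha$ and $\tau$ control the density of poles and hence the order $\rho(f)=(\alpha^2+(\log|\tau|)^2)/(\pi\alpha)$, which can be any value in $(0,\infty)$.

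The dimension is then read off from the order via $\dim\I(f)=2\rho/(1+\rho)$: the upper bound is the Bergweiler--Kotus theorem \cite{bergweiler2} (applicable because there are no asymptotic values and the pole multiplicities are bounded), and the lower bound is a McMullen-type nested-annuli argument using the local expansion near poles, $f(w)\sim (C w_0^{1-\eta}/(w-w_0))^2$. The endpoints $d=0$ and $d=2$ are handled by separate explicit examples ($\wp\circ\cosh^{-1}$ of order zero, and $\wp(2\pi\cosh z)$ of infinite order), both still with exactly the four critical values of $\wp$. Your picture of exponential-type tracts contributing asymptotic values, with flexibility coming from tract geometry, is the wrong model here; the flexibility is entirely in the pole distribution, and the worry you flag about ``only two singular values left to shape the tracts'' simply does not arise.
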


The existing gap between Theorem A and Theorem \ref{main} is the class $\s_3$. It is plausible that the above theorem holds in $\s_3$. However, our construction will not give this.

Another point that we would like to address concerns a question for the invariance of Hausdorff dimensions of escaping sets. To be more specific, it asks whether two quasiconformally equivalent functions will have the escaping sets of equal Hausdorff dimension. We say that two Speiser functions $f$ and $g$ are quasiconformally equivalent if there are quasiconformal mappings $\varphi,\psi:\c\to\c$ such that $\varphi\circ f=g\circ\psi$; see \cite{eremenko2}. (We can also define topological equivalence by requiring $\varphi$ and $\psi$ to be homeomorphisms.) The above question was originally asked for entire functions which are not necessarily Speiser functions. Counterexamples are given recently in the meromorphic setting in \cite{ac1}. Here we provide another class of counterexamples.

\begin{theorem}\label{incm}
There exist quasiconformally equivalent meromorphic functions $f,g\in\s_4$ for which $\dim\I(f)\neq\dim\I(g)$.
\end{theorem}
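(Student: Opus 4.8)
The plan is to extract Theorem \ref{incm} from the construction behind Theorem \ref{main} together with the standard formalism of quasiconformal surgery. I would rely on two facts. First, quasiconformal equivalence of meromorphic functions is an equivalence relation and is invariant under the operations used in surgery: if $f$ is meromorphic and one pre- or post-composes with quasiconformal maps and straightens the resulting quasiregular map by the measurable Riemann mapping theorem, the outcome is a meromorphic function quasiconformally equivalent to $f$. Second, quasiconformal equivalence preserves the cardinality of the singular set, since the conjugating homeomorphisms carry critical and asymptotic values of one map to those of the other; in particular it preserves membership in $\s_4$. Hence it suffices to exhibit two functions occurring in the proof of Theorem \ref{main} that are quasiconformally equivalent but have escaping sets of different Hausdorff dimension.

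First I would revisit the family $\{f_d\}_{d\in[0,2]}\subset\s_4$ built in the proof of Theorem \ref{main}. That construction proceeds by quasiconformal surgery on a fixed model: outside a prescribed set the maps $f_d$ all agree, after straightening, with one and the same meromorphic function, while inside that set one inserts model behaviour whose scaling is what pins down $\dim\I(f_d)=d$. The point to isolate is that the \emph{combinatorial} data of the surgery — the line complex, the labelling of the four singular values, the location of the modified pieces — is independent of $d$ as $d$ ranges over a suitable subinterval $J\subseteq[0,2]$; only certain conformal moduli vary with $d\in J$. Consequently the quasiregular models produced for parameters in $J$ realise the same topological type with uniformly bounded dilatation, and for two values $d_1,d_2\in J$ whose glued-in conformal structures are themselves quasiconformally equivalent one obtains a quasiconformal homeomorphism intertwining the two models; straightening both sides and composing the conjugacies with the straightening maps yields quasiconformal $\varphi,\psi:\c\to\c$ with $\varphi\circ f_{d_1}=f_{d_2}\circ\psi$. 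Choosing $d_1\neq d_2$ in $J$ and invoking Theorem \ref{main} for $\dim\I(f_{d_i})=d_i$ then completes the argument.

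The hard part will be to locate a parameter interval $J$ along which two competing requirements hold simultaneously. On the one hand the surgery must be \emph{quasiconformally rigid} on $J$: uniformly bounded dilatations, and, crucially, a common sequence of scales $R_n\to\infty$ at which the model is altered (up to bounded distortion), since a quasiconformal self-map of the plane distorts the moduli of round annuli only by bounded factors and therefore cannot match genuinely different scale sequences. On the other hand $d\mapsto\dim\I(f_d)$ must be non-constant on $J$. In other words, one must arrange that the Hausdorff dimension of the escaping set can be moved by varying only the bounded conformal geometry of the inserted pieces at a \emph{fixed} scale sequence, rather than by varying the scale sequence itself. Verifying that the construction of Theorem \ref{main} admits such an arrangement — possibly after a mild modification, and restricted to a subinterval rather than all of $[0,2]$ — is the technical heart of the proof, and the point at which one has to reopen the estimates underlying Theorem \ref{main} instead of invoking it as a black box.
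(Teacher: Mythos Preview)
Your overall strategy is the paper's: pick two members of the family underlying Theorem~\ref{main} with distinct escaping-set dimensions and show they are quasiconformally equivalent. But your picture of that family is wrong, and the ``hard part'' you isolate is not the actual obstacle.

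The functions are not built by modifying a fixed meromorphic map at a sequence of scales $R_n\to\infty$. Rather, each $f$ arises by straightening a quasiregular map of the shape $G=\wp\circ\Phi\circ(z\mapsto z^{\eta})$, where $\wp$ is a Weierstra{\ss} elliptic function with periods $1,\tau$, the exponent $\eta=\alpha\mu/(2\pi)$ is determined by the pair $(\alpha,\tau)$, and $\Phi$ is a quasiconformal self-map of the sector $\c_{\alpha}$ supported on two half-strips. There is no discrete scale sequence anywhere; the parameter that moves the dimension is the \emph{order} $\rho=(\alpha^2+(\log|\tau|)^2)/(\pi\alpha)$, and one has $\dim\I(f)=2\rho/(1+\rho)$ (Proposition~\ref{mainpro}). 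So your worry about matching scale sequences under a quasiconformal map is addressing a feature the construction does not have.

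Conversely, the quasiconformal equivalence you flag as delicate is almost free here. For two choices $(\alpha_i,\tau_i)$ one has $G_i=\wp_i\circ\Phi_i\circ(z\mapsto z^{\eta_i})$; the maps $z\mapsto z^{\eta_i}$ are conformal, the $\Phi_i$ are globally quasiconformal, and any two Weierstra{\ss} functions are quasiconformally equivalent because they have the same topological type (four critical values, all poles double, no asymptotic values). Composing these ingredients and the straightening maps produces the required $\varphi,\psi$; this is exactly Proposition~\ref{eqdo}, whose proof the paper defers to \cite[Theorem~3.2]{ac1}. No restriction to a subinterval $J$ is needed: \emph{any} two functions in the family with $\rho_1\neq\rho_2$ already witness Theorem~\ref{incm}.

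In short, you located the right mechanism but inverted the difficulty: the equivalence is structural and essentially immediate from the form $\wp\circ(\text{qc map})$, while the dimension computation $\dim\I(f)=2\rho/(1+\rho)$ is where the analysis lives. Your proposal, as written, does not contain a proof of either piece.
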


We also remark that if two quasiconformally equivalent meromorphic functions belong to $\s_3$, then they are actually conformally equivalent (see \cite[Lemma 2.3]{bergweiler-cui}). This implies immediately that two such functions have the same order of growth. It is plausible that two such functions will have escaping sets of the same Hausdorff dimension. In this sense, the above Theorem \ref{incm} may be optimal.

\medskip
\noindent{\emph{Structure of the article.}} In Section \ref{section2} we discuss briefly transcendental meromorphic functions with few singular values. In Section \ref{thecon} we construct meromorphic functions with four singular values of arbitrary order. The last section is then devoted to the outline of the estimate for the Hausdorff dimension of their escaping sets. 

\begin{ack}
We would like to thank the referee for many useful comments and corrections. The second author would also like to thank Vergstiftelsen for financial support.
\end{ack}

\section{Speiser functions with few singular values}\label{section2}

Let $f:\c\to\hc$ be transcendental and meromorphic. We say that $c$ is a \emph{critical value} of $f$, if $c$ has a preimage with zero spherical derivative. With this definition, $\infty$ will be a critical value if there are any multiple poles. $a\in\hc$ is an \emph{asymptotic value} of $f$, if there exists a curve $\gamma$ tending to $\infty$ such that $f(\gamma)$ tends to $a$. As a simple example, $0$ and $\infty$ are asymptotic values of $e^z$. A value $s$ is called a \emph{singular value}, if it is either a critical or asymptotic value. See \cite{bergweiler18} for a classification of singularities of the inverse of a meromorphic function. Singular values play a vital role in the dynamics of meromorphic functions, we refer to \cite{bergweiler1} for more details and explanations.

\medskip
\noindent{\emph{Meromorphic functions with two singular values.}} The following simple fact concerning meromorphic functions with two singular values is folklore and it is not easy to locate a reference. Therefore, an outline of proof is presented for completeness. Recall that $\s_q$ denote the class of Speiser functions with exactly $q$ singular values.

\begin{theorem}\label{s2}
Let $f\in\s_2$. Then $f$ is of the form $M\circ\exp\circ A$, where $M$ is M\"obius and $A$ is linear.
\end{theorem}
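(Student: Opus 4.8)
The plan is to start from the observation that a Speiser function $f\in\s_2$ has no critical points and no logarithmic singularities beyond the two singular values, so the inverse function $f^{-1}$ can be continued analytically along any path in $\hc$ that avoids the two singular values $a_1,a_2$. First I would normalize by post-composing with a M\"obius transformation so that the two singular values become $0$ and $\infty$; this is harmless since the class of functions of the form $M\circ\exp\circ A$ is invariant under post-composition by M\"obius maps. After this normalization the goal is to show $f=\exp\circ A$ for some affine $A$, equivalently that $f$ has no zeros and no poles and that $f'/f$ is a nonzero constant.

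Next I would analyze the singularities of $f^{-1}$ over $0$ and over $\infty$. Since $f$ is transcendental with only finitely many singular values, a neighborhood of each singular value from which the punctured disk is removed is covered by $f$ with only ``nice'' tracts: over each of $0$ and $\infty$ the singularity is either a logarithmic branch point or an ordinary (algebraic) branch point, and $f$ having exactly two singular values means all of the mass of the covering is accounted for by these. Because $f$ is transcendental, at least one of the two points must carry a logarithmic singularity. I would argue that in fact both $0$ and $\infty$ are logarithmic singularities and there are no other singularities at all: an algebraic branch point over $0$ would force a genuine zero of $f$ of some finite multiplicity, i.e. a critical point mapping to $0$ if the multiplicity exceeds one, or an unramified preimage otherwise — and one then tracks, via the Riemann–Hurwitz / Euler characteristic bookkeeping for the covering $f:\c\setminus f^{-1}\{0,\infty\}\to\hc\setminus\{0,\infty\}$, that the only topological possibility consistent with the domain being $\c$ (genus $0$, one puncture at $\infty$ from the essential singularity side) is that $f^{-1}(0)=f^{-1}(\infty)=\emptyset$ and both singular values are logarithmic. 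Concretely, $\log f$ is then a well-defined entire function $g:\c\to\c$ (choosing a branch), and the map $g$ omits no values except possibly none — $f=e^{g}$ already omits $0$ and $\infty$, matching the two prescribed singular values, and $g$ itself must be a conformal automorphism of $\c$, hence affine, because if $g$ took some value $w$ twice or with multiplicity, $e^{w}$ would be a critical value of $f$ distinct from $0,\infty$, contradicting $f\in\s_2$; and if $g$ omitted a value $w$, then $e^w$ would be an asymptotic value of $f$, again a third singular value. So $g=A$ is linear, and undoing the normalization gives $f=M\circ\exp\circ A$.

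I expect the main obstacle to be making the covering-space bookkeeping rigorous and watertight: one must carefully set up the (infinite-degree) covering $f:\c\setminus f^{-1}\{a_1,a_2\}\to\hc\setminus\{a_1,a_2\}$, use that $\hc\setminus\{a_1,a_2\}$ has fundamental group $\mathbb Z$, classify the connected covering spaces (an annulus, a punctured disk, or $\c$ minus a discrete set), and correctly rule out every configuration except the one coming from $\exp$ — in particular ruling out the presence of \emph{any} critical point or pole, which is where transcendence of $f$ and the fact that the domain is all of $\c$ (not a disk or annulus) get used. The rest — that a zero-free, pole-free entire-valued logarithm must be affine once we know it has no critical points and omits no finite value — is then the short argument sketched above. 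Since the paper only asks for an outline, I would present this covering-space classification as the core of the proof and treat the affine conclusion as a one-line consequence.
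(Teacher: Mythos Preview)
Your proposal is correct and takes essentially the same approach as the paper: normalize the two singular values to $0$ and $\infty$, view $f$ restricted to $\c\setminus f^{-1}\{0,\infty\}$ as a covering of the twice-punctured sphere (fundamental group $\mathbb{Z}$), use transcendence to force this to be the universal cover so that the preimage set is empty, and conclude $f=\exp\circ(\text{affine})$. The paper's write-up is a touch more streamlined---it bypasses the Riemann--Hurwitz language (which does not literally apply to infinite-degree covers) in favor of exactly the subgroup-of-$\mathbb{Z}$ classification you outline in your final paragraph, and it extracts the affine map via uniqueness of the universal cover rather than by separately checking that $\log f$ is injective---but the substance is the same.
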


\begin{proof}[Sketch of proof]
Without loss of generality, we assume that the two singular values of $f$ are $0$ and $\infty$. Otherwise we consider $M_1\circ f$, where $M_1$ is a M\"obius transformation sending the two singular values of $f$ to $0$ and $\infty$. Then
$$f:\c\setminus\{f^{-1}(0),\,f^{-1}(\infty)\}\rightarrow \hc\setminus\{0,\infty\}$$
is a covering map. Note that the fundamental group of $\hc\setminus\{0,\infty\}$ is isomorphic to $\mathbb{Z}$. The transcendence of $f$ then implies that the fundamental group of $\c\setminus\{f^{-1}(0),\,f^{-1}(\infty)\}$ is trivial and thus $\c\setminus\{f^{-1}(0),\,f^{-1}(\infty)\}$ is simply connected such that the above $f$ is a universal covering to $\hc\setminus\{0,\infty\}$. Since meromorphic functions are discrete maps, so we have that $\c\setminus\{f^{-1}(0),\,f^{-1}(\infty)\}=\c$. This implies that both singular values are actually omitted. Note that the exponential map $\exp$ is a holomorphic universal covering from $\c$ to $ \hc\setminus\{0,\infty\}$. Now it follows from the essential uniqueness of the universal covering spaces that there exists a holomorphic homeomorphism $\phi: \c\to\c$ such that $f=\exp\circ\phi$. Since a holomorphic homeomorphism of the complex plane must be a linear map, we thus have
$$f(z)=e^{Az+B},$$
where $A(\neq 0),\,B$ are complex constants. This completes the proof.
\end{proof}

It follows immediately that if $f\in\s_2$, then the singular values of $f$ are both asymptotic values. Moreover, if one of the asymptotic values is at $\infty$, then $f$ is of the form $\lambda e^{z}$; if both of them are finite, then $f$ can be written as $M(e^z)$, where $M$ is M\"obius sending $0$ and $\infty$ to two finite points. This, together with the results obtained in \cite{mcmullen11, kotus2}, shows that Theorem A holds; see also \cite{cuiwei3}.

\begin{figure}[htbp] 
	\centering
	\includegraphics[width=7cm]{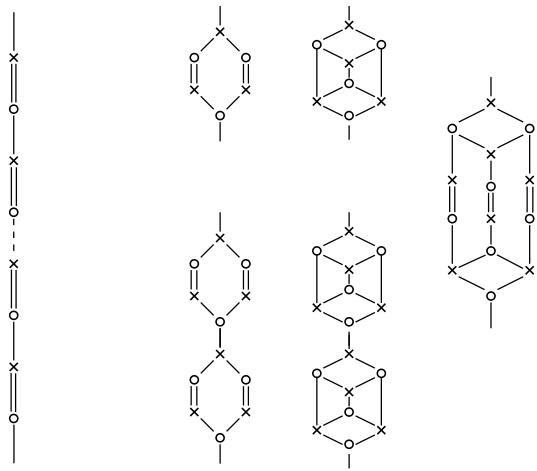}
	\caption{Producing meromorphic functions in $\s_3$ by using line complex. Replacing the dashed line in the left picture by any one of the finite graphs on the right or any finite combination of these graphs will give functions with three singular values which are not topologically equivalent.}
	\label{s3}
\end{figure}

The next natural step would be asking if a similar result stated in Theorem \ref{s2} will hold if one has more singular values. This need not be true. In fact, even for meromorphic functions with three singular values, one cannot expect a similar function-theoretic rigidity as Theorem \ref{s2}. Recent work of Bishop \cite{bishop1} shows that there are uncountably many \emph{essentially different} entire functions with two critical values. More precisely, the classes of topologically equivalent entire functions with two critical values are uncountable. See also \cite{cuiwei3}. One can also use the so-called Maclane-Vinberg method to construct entire functions with two singular values; see \cite[Observation 5.2]{bergweiler6}. For general meromorphic functions with three singular values, one can resort to the theory of line complex to construct such functions; see \cite[Chapter 7]{goldbergmero}. Without giving a detailed account of line complex, we mention that one can show that there exist infinitely many non-equivalent meromorphic functions in $\s_3$ by modifying the left graph in Figure \ref{s3}: Replacing the dashed line by one of finite graphs shown on the right of Figure \ref{s3} will produce meromorphic functions with three singular values. It is clear that there are infinitely many such functions, since one can consider any finite combination of these finite graphs which are then used to replace the dashed line. The produced meromorphic functions are those with rational Schwarzian derivatives. For a complete treatment of these functions, we refer to \cite[Chapter 7]{goldbergmero} and \cite{elfving1}.

\section{The construction}\label{thecon}



For $\delta\in(0,2\pi]$, put
$$\c_{\delta}=\left\{z=re^{i\theta}:\,r>0,\,0<\theta<\delta \right\}.$$
In particular, if $\delta=2\pi$, then $\c_{\delta}$ is the slit plane $\c\setminus\mathbb{R}^+$. Choosing the natural branch of the logarithm, for any $\alpha\in (0,2\pi]$, we set
\begin{align*}
h:\,\c_{2\pi}&\to\c_{\alpha},\\
z&\mapsto z^{\alpha/2\pi}
\end{align*}
which then defines a conformal map. 

To construct meromorphic functions in $\s_4$, we will consider the restriction of some carefully chosen Weierstra{\ss} elliptic function $\wp$ in the sector domain $\c_{\alpha}$. Then the function defined as $\wp\circ h$ will be meromorphic in $\c_{2\pi}$. We will then need to extend this function across the positive real axis in order to have a meromorphic function in the plane. However, the extension need not be continuous; in other words, for $x\in\mathbb{R}^+$, $\wp\circ h (x)$ need not coincide when one approaches $x$ respectively from the upper and lower half planes. This will be resolved by introducing a spiral map and then using a quasiconformal surgery. See Figure \ref{idea} for an illustration.

\begin{remark}
The spiral map we are going to introduce will help us to achieve every possible finite order. In a recent paper \cite{bergweiler23}, Bergweiler and Eremenko also used this idea to solve an open problem in the theory of complex differential equations.
\end{remark}

\begin{figure}[htbp] 
	\centering
	\includegraphics[width=11cm]{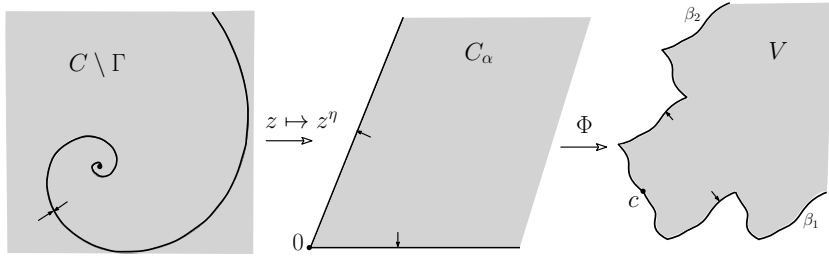}
	\caption{Shown is the idea of construction. We have put $\eta=\alpha\mu/(2\pi)$. The map $\Phi$ is quasiconformal which helps to remove the discontinuity arising on the logarithmic spiral $\Gamma$.}
	\label{idea}
\end{figure}

\medskip

From now on we fix one $\alpha$ and use the corresponding function $h$ as defined above. Then in the sector $\c_{\alpha}$ we consider a Weierstra{\ss} elliptic function $\wp$ with two periods $1$ and $\tau$, where $\tau$ will satisfy the following condition:
\begin{equation}\label{anre}
\arg\tau=
\begin{cases}
	~\alpha,&\text{if~}\,~\alpha<\pi;\\
	~\alpha-\pi,&\text{if~}\,~\alpha>\pi.
\end{cases}
\end{equation}

In case that $\alpha=\pi$, the function defined as $\wp\circ h$ extends continuously across the positive real axis and thus gives a function meromorphic in the plane (which is actually $\wp(\sqrt{z})$). This is an easy case to deal with, so we will assume in the sequel that $\alpha\neq\pi$. The main point of the condition \eqref{anre} is to make sure that the two prime periods of the chosen Weierstra{\ss} $\wp$-function lie on two boundaries of $\c_{\alpha}$. Now we put
$$e_1=\wp\left(\frac{1}{2}\right),\,\,e_2=\wp\left(\frac{1+\tau}{2}\right),\,\,e_3=\wp\left(\frac{\tau}{2}\right),$$
which are \emph{finite} critical values of $\wp$ (with another critical value at $\infty$).

One can then see immediately that
$$g_1:=\wp\circ h:\,\c_{2\pi}\to\c$$
is well defined and meromorphic in the slit plane. However, $g_1$ may not be able to extend continuously across the positive real axis, as we mentioned above. One of the main objectives in the construction we will make is to circumvent this problem.

By choosing the natural branch of the power map, let 
\begin{align}
p:  \c_{2\pi}  &\to \c; \, ~\,z\mapsto z^{1/\mu}.
\end{align}
It can readily be seen that if $\mu$ has real part equal to $1$ then the image of $\c\setminus\mathbb{R}^+ $ is an open set whose complement is a logarithmic spiral. Let us denote this spiral by $\Gamma$. Now note that if $\{x_n\}_{n=0}^{\infty}$ and $\{y_n\}_{n=0}^{\infty}$ are two sequences of complex numbers approaching a point $z \in \mathbb{R}^+$ from different sides of the real axis, the limits of $p(x_n)$ and $p(y_n)$ as $n \rightarrow \infty$ may well be different. So the map $p$ may not have a continuous extension to $\mathbb{R}^+$. For our purposes, we put
\begin{equation}\label{mudef}
	\mu=1-i\,\frac{\log|\tau|}{\alpha}.
\end{equation}

Denote by $q$ the inverse of $p$; i.e., $q(z)=z^{\mu}$. Then the following function
\begin{align}
g_2: \c\setminus\Gamma &\to\c_{2\pi},\\
z&\mapsto g_1(q(z))
\end{align}
is well defined and meromorphic in the complement of a logarithmic spiral $\Gamma$. It may not extend continuously across $\Gamma$, but by the choice of $\mu$ and $\tau$, with a simple computation, we can extend $g_2$ continuously to a discrete set of $\Gamma$ whose points will be mapped by $h\circ q$ to poles of $\wp$.
We show below how to remove the discontinuities between these discrete set of points on $\Gamma$ by using a quasiconformal surgery. In short, we will construct a quasiconformal self-map $\Phi$ of $\c_{\alpha}$ such that the new defined function $\wp\circ\Phi\circ h\circ q$ extends continuously across $\Gamma$.

\medskip
Let $P$ denote the parallelogram formed by four vertices $0,\,1/2,\,(1+\tau)/2$ and $\tau/2$. It follows from the basic properties of Weierstra{\ss} elliptic functions that $\delta:=\wp(\partial P)$ is a simple closed curve on $\hc$, which passes through $\infty$ on both sides such that $\hc\setminus\delta$
consists of two domains $A$ and $B$. Suppose without loss of generality that $A=\wp(P)$. Note that all critical values of $\wp$ lie on $\delta$. Now we can choose an analytic closed curve $\gamma$ in $\hc$ such that $\gamma\cap\delta=\{e_2,\,\infty\}$. To achieve this, we first choose an analytic curve $\gamma_1$ lying entirely in $A$ with two endpoints being $e_2$ and $\infty$; similarly an analytic curve $\gamma_2$ is chosen to lie completely in $B$ with two endpoints $e_2$ and $\infty$. Then $\gamma$ is defined as the union of $\gamma_1$ and $\gamma_2$ together with their common endpoints. See Figure \ref{onep} for an illustration. Now we consider suitable preimage of $\gamma$ under the function $\wp$. More precisely, we have the following result; compare this with Proposition 3.1 of \cite{ac1}. Put 
$$c=\dfrac{1+\tau}{2}.$$

\begin{figure}[htbp] 
	\centering
	\includegraphics[width=14cm]{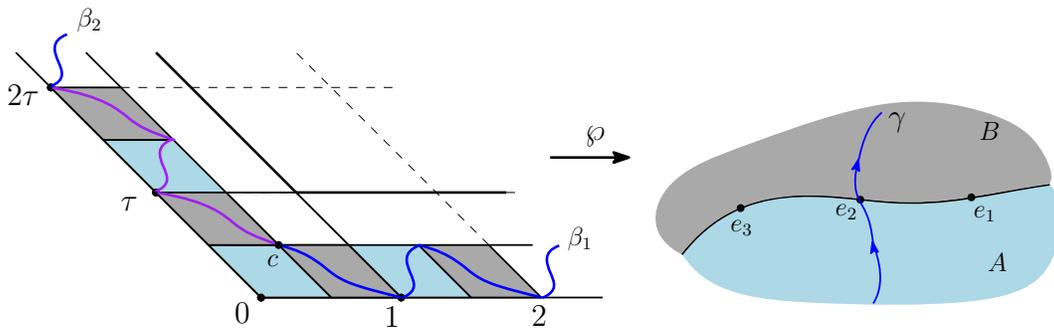}
	\caption{The curve $\gamma$ is chosen such that it passes through the critical value $e_2$ and $\infty$. Then we choose suitable two preimages $\beta_1$ and $\beta_2$ of $\gamma$ such that they are periodic and both start from $c$ which is a preimage of the critical value $e_2$.}
	\label{onep}
\end{figure}

\begin{proposition}
Let $\gamma$ be as above. Then there exist two piecewise analytic curves $\beta_i$  with $\wp(\beta_i)=\gamma$ for all $i$, such that the following hold:
\begin{itemize}
\item $\beta_1\cap\beta_2=\{c\}$.
\item $\beta_1$ starts from  the point $c$ and is periodic with period $1$, i.e., $z\in\beta_1$ implies that $z+1\in\beta_1$. Moreover, $\beta_1$ passes through poles at $n$ of $\wp$ for all $n\geq 1$.
\item $\beta_2$ starts from $c$ and is periodic with period $\tau$, i.e., $z\in\beta_2$ implies that $z+\tau\in\beta_2$. Moreover, $\beta_2$ passes through poles $n\tau$ for all $n\geq 1$.
\end{itemize}
\end{proposition}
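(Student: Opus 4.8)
The plan is to construct the curves $\beta_1$ and $\beta_2$ by lifting $\gamma$ through $\wp$ in a controlled way, using the periodicity and the symmetry of the $\wp$-function together with the fact that $c=(1+\tau)/2$ is a critical point of $\wp$ lying on $\partial P$. First I would recall that since $\gamma$ passes through $e_2=\wp(c)$ with $\gamma\cap\delta=\{e_2,\infty\}$, and since $\wp$ has a double critical point at $c$ in the sense that $\wp'(c)=0$, any lift of $\gamma$ starting at $c$ will, near $c$, consist of two analytic arcs meeting at $c$ (the local behaviour of $\wp$ near $c$ is a $2$-to-$1$ branched cover). Away from $c$ and from the poles, $\wp$ is a local homeomorphism onto its image minus the critical values, so lifts extend analytically; the only places where something must be checked are when a lift runs into $\partial P$ (where $\wp$ is not injective but where the reflection/periodicity relations pin down what happens) or into a pole of $\wp$, i.e. a lattice point, which is the preimage of $\infty\in\gamma$.

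The key steps, in order: (1) Build $\beta_1$ starting at $c$ by first lifting the sub-arc $\gamma_1\subset A=\wp(P)$; since $\wp|_{\inte P}$ is a conformal bijection onto $A$, this arc lifts uniquely to an analytic arc inside $P$ from $c$ toward the vertex $0$ (or $1$), which is a pole, so $\infty\in\gamma_1$ is hit at a lattice point. (2) Continue past that lattice point: the other arc $\gamma_2\subset B$ lifts into the neighbouring fundamental domain, and here I use the period-$1$ translation structure of the lattice together with the known identification of the reflected tile to see that the continuation re-enters along a translate of the first arc; this is where the periodicity $z\in\beta_1\Rightarrow z+1\in\beta_1$ is forced, and repeating the argument shows $\beta_1$ passes through every pole $n$, $n\ge1$. (3) Do the symmetric construction with $\tau$ in place of $1$ to get $\beta_2$, periodic with period $\tau$ and passing through the poles $n\tau$; here one invokes the analogous reflection across the side of $P$ from $c$ to $\tau/2$. (4) Finally check $\beta_1\cap\beta_2=\{c\}$: both curves emanate from $c$, but after leaving $c$ they live in disjoint regions — $\beta_1$ tracks the lattice direction $1$ and $\beta_2$ the direction $\tau$ — so a separation argument (e.g. $\beta_1$ stays in a strip around $\R^+$ determined by the arcs $\gamma_1,\gamma_2$, while $\beta_2$ stays in the corresponding strip around $\R^+\tau$, and these strips meet only near the origin) gives the claim. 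Throughout, one should note that "piecewise analytic" is exactly what one expects: the pieces are the lifts of $\gamma_1$ and $\gamma_2$, joined at lattice points and at translates of $c$.

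I expect the main obstacle to be step (2): controlling how a lift of $\gamma$ behaves as it crosses $\partial P$ and, in particular, verifying that after passing through the pole the continued lift is genuinely the translate of the previous arc rather than some other preimage of $\gamma$. This requires using the precise relationship between $\wp$ on adjacent fundamental tiles (the fact that $\hc\setminus\delta$ has exactly the two components $A$ and $B$, that $\wp$ maps $P$ onto $A$ and the reflected parallelogram onto $B$, and that the gluing along $\partial P$ is by the half-period translations and the elliptic involution $z\mapsto -z$). One must check that the arc $\gamma_1$ (in $A$) and the arc $\gamma_2$ (in $B$), which together make up $\gamma$ and meet only at $e_2$ and $\infty$, lift to arcs that close up consistently across the pole; choosing $\gamma_1,\gamma_2$ analytic and disjoint except at the two common endpoints is what makes this bookkeeping go through, and it is essentially the same mechanism as in Proposition 3.1 of \cite{ac1}, so I would model the argument on that proof and adapt the lattice directions to the current setting where the two prime periods sit on the two boundary rays of $\c_\alpha$.
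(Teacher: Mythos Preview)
Your plan is essentially the paper's: lift the two arcs $\gamma_1\subset A$ and $\gamma_2\subset B$ into half-period parallelograms using that $\wp$ is conformal on each, then concatenate by periodicity. The paper's execution is cleaner than what you sketch, and one detail in your step~(1) is off. If you lift $\gamma_1$ into $P$ itself, the resulting arc runs from $c$ to the pole at $0$, not toward $1$ (note $1$ is not a vertex of $P$); so your $\beta_1$ would head in the negative real direction. The paper avoids this by starting with the \emph{other} arc: since $\wp\colon P+\tfrac12\to B$ is conformal, $\gamma_2$ lifts to an arc $\beta_1^1\subset P+\tfrac12$ from $c$ to the pole at $1$; then, since $\wp\colon P+1\to A$ is conformal, $\gamma_1$ lifts to an arc $\beta_1^2\subset P+1$ from $1$ to $c+1$. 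Setting $\beta_1=\bigcup_{n\ge0}(\beta_1^1\cup\beta_1^2+n)$ gives the desired periodic curve through the poles $n\ge1$ immediately, with no need to analyse boundary crossings, the involution $z\mapsto-z$, or continuation through a double pole---the translated parallelograms are chosen so that each lift is simply the inverse of a conformal bijection. Your anticipated ``main obstacle'' in step~(2) thus disappears once the right translates are used. The construction of $\beta_2$ is identical with $\tau$ in place of $1$, and the intersection claim $\beta_1\cap\beta_2=\{c\}$ follows since (apart from the common point $c$) the two curves live in disjoint chains of translated parallelograms.
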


\begin{proof}
	For convenience, we put $\gamma_1=\gamma\cap A$ and $\gamma_2=\gamma\cap B$. We will use $W+c$ as the translation by a complex number $c$ for any set $W\subset\c$; in other words, $W+c=\{z+c: z\in W\}$. Recall that $P$ is the parallelogram formed by four points $0,\,1/2,\,(1+\tau)/2$ and $\tau/2$. Now by elementary properties of Weierstra{\ss} elliptic functions we know that $\wp: P+\frac{1}{2}\to B$ is conformal. Therefore, $\gamma_2$ has a preimage, denoted by $\beta^{1}_1$ in $P+1/2$ which is an analytic curve connecting $(1+\tau)/2$ and $1$. The latter property follows easily since $\gamma_2\subset B$ connects $e_2$ and $\infty$. Similarly, since $\wp: P+1\to A$ is conformal we see immediately that $\gamma_1$ has a preimage $\beta^{2}_1$ in $P+1$ which is an analytic curve connecting $1$ and $(3+\tau)/2$. Now by periodicity of $\wp$, the curve $\beta^{1}_1\cup\beta^{2}_1$ and its translations by $n\in\mathbb{N}$ will be mapped conformally onto $\beta$. Now we define 
	$$\beta_1=\bigcup_{n\geq 0}\left(\beta^{1}_1\cup\beta^{2}_1 + n\right).$$
It is clear that $\beta_1$ satisfies all the required properties.
		
In the same way we can define $\beta_2$ as claimed. We omit details here.
\end{proof}

We denoted by $V$ the domain contained in $\c_{\alpha}$ and bounded by $\beta_1$ and $\beta_2$. Then the following is concentrated on the construction of a quasiconformal mapping
$$\Phi: \c_{\alpha}\to V$$
such that the function $\wp\circ\Phi\circ h\circ q$ extends continuously across the logarithmic spiral $\Gamma$ and thus gives a function continuous throughout the whole plane. See Figure \ref{figshow} for the construction of $\Phi$.

\begin{figure}[htbp] 
	\centering
	\includegraphics[width=13cm]{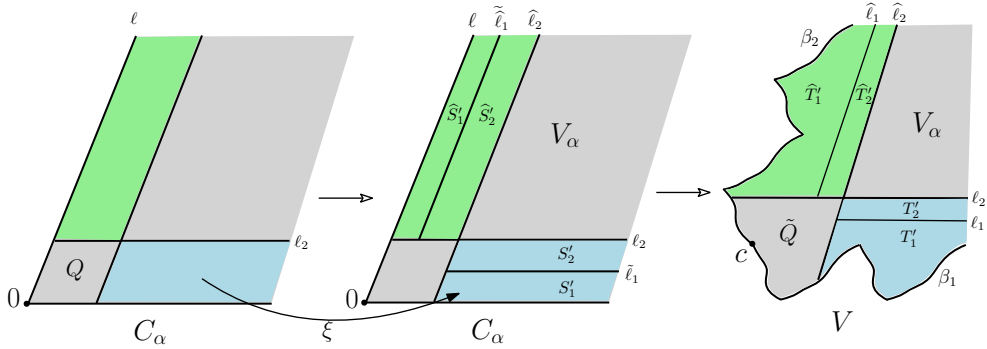}
	\caption{Show is a sketch of the construction of the quasiconformal map $\Phi$. In the figure, $S'_1$ and $S'_2$ are the restrictions of $S_1$ and $S_2$ to the sector $\c_{\alpha}$ while $T'_1$ and $T'_2$ are the corresponding images of $S'_1$ and $S'_2$ under the maps $\phi_{1,1}$ and $\phi_{1,2}$. Same applies to $\widehat{S}'_1$, $\widehat{S}'_2$ and $\widehat{T}'_1,\,\widehat{T}'_2$.}
	\label{figshow}
\end{figure}

First by periodicity of Weierstra{\ss} elliptic functions and the construction of $\beta_1$, we can define a periodic curve $\tilde{\beta}_1$ which is the extension of $\beta_1$ along the direction of the negative real axis. Now choose a real number $a>0$ such that $a>\max_{z\in\tilde{\beta}_1}\Im(z)$. Define
$$T_1=\left\{z:\,\Im_{\substack{w\in\tilde{\beta}_1}, \,\substack{\re w=\re z}}(w)<\Im(z)<a\right\}.$$
Then by \cite[Lemma 3.1]{ac1}, there is a real number $a'>0$ such that with
$$S_1=\left\{z:0<\Im(z)<a'\right\}$$
there is a conformal map
$$\phi_{1,1}: S_1\to T_1$$
which fixes three boundary points $0$ and $\pm\infty$ and is periodic with period one. In other words,
$$\phi_{1,1}(z+1)=\phi_{1,1}(z)+1$$
for any $z\in S_1$. Moreover, $\phi_{1,1}$ extends to the boundary as a piecewise diffeomorphism.

Choose another real number $b$ satisfying $b>\max\{a,\,a'\}$ and put
$$S_2=\left\{z: a'<\Im(z)<b\right\}$$
and
$$T_2=\left\{z: a<\Im(z)<b\right\}.$$
We now want to define a quasiconformal map from $S_2$ to $T_2$ which interpolates between the identity map on the upper boundary of $S_2$ and the extension of $\phi_{1,1}$ on the lower boundary of $S_2$. For convenience, we denote by $\ell_1$ the horizontal curve $\{z: \Im(z)=a\}$ and $\tilde{\ell}_1$ the curve $\{z: \Im(z)=a'\}$. We also put $\ell_2=\{z:\Im(z)=b\}$. Suppose that $\chi_1$ is the boundary extension of $\phi_{1,1}$ to $\tilde{\ell}_1$. Then it follows from the construction of $\phi_{1,1}$, and the Schwarz reflection principle (and the fact that $\tilde{\ell}_1$ and $\ell_1$ are straight lines), that $\chi_1:\tilde{\ell}_1\to\ell_1$ is analytic. Moreover, $\chi_1(z+1)=\chi_1(z)+1$ for $z\in\mathbb{R}$.
By considering
$$\widetilde{\chi}_1(z)=\chi_1(z+ia')-ia$$
we see that $\widetilde{\chi}_1: \mathbb{R}\to\mathbb{R}$ is increasing and analytic.
We also define
$$\widetilde{\chi}_2(z)=z+i(a'-a).$$
Note that $\widetilde{\chi}_1$ is obtained from $\chi_1$ and the identity map by moving the lower boundaries of $S_2$ and $T_2$ to the real axis. Suppose the new strips are $S''_2$ and $T''_2$ respectively. Then
$$L(x+iy)=\left(1-\frac{y}{b-a'}\right)\widetilde{\chi}_1(x)+\frac{y}{b-a'}\widetilde{\chi}_2(x)+i\,\frac{b-a}{b-a'}y$$
is the linear interpolation between $\widetilde{\chi}_1$ and $\widetilde{\chi}_2$. This is actually a quasiconformal map, as can be seen by checking the Jacobian of $L$, which is
$$\frac{b-a}{b-a'}\left(\left(1-\frac{y}{b-a'}\right)\widetilde{\chi}'_1(x)+\frac{y}{b-a'}\widetilde{\chi}'_2(x)\right).$$
It is strictly bigger than zero since both $\widetilde{\chi}_1$ and $\widetilde{\chi}_2$ are increasing. Together with the periodicity, this implies that $L$ is quasiconformal. Now we define
\begin{align}
	\phi_{1,2}: S_2 &\to T_2,\\
	z&\mapsto L(z-ia')+ia.
\end{align}
This is a quasiconformal map.

Along the $\tau$-direction, we use a similar idea as above to obtain a quasiconformal map. More precisely, we first consider a periodic curve $\tilde{\beta}_2$ which is the natural extension of $\beta_2$. With
$$\ell=\left\{z: \arg(z)=\arg(\tau)~\text{or}~\arg(\tau)-\pi\right\},$$
we define, for some real numbers $c',\,c$ and $d$, where $c= (1+\tau)/2$ and $c',d$ will be determined later,
\begin{align}
\widetilde{\widehat{\ell}}_1&=\ell+c', \nonumber\\
\widehat{\ell}_1&=\ell+c, \nonumber \\
  \widehat{\ell}_2&=\ell+d.
\end{align}
We also denote by $\widehat{S}_1$ the strip between $\ell$ and $\widetilde{\widehat{\ell}}_1$, by $\widehat{T}_1$ the domain bounded by $\tilde{\beta}_2$ and $\widehat{\ell}_1$. Moreover, $\widehat{S}_2$ will be the strip bound by $\widetilde{\widehat{\ell}}_1$ and $\widehat{\ell}_2$, while $\widehat{T}_2$ is bound by $\widehat{\ell}_1$ and $\widehat{\ell}_2$.

Now $c'$ and $c$ are chosen (similarly as $a'$ and $a$ above) such that there exists a conformal map
$$\phi_{2,1}: \widehat{S}_1\to\widehat{T}_1$$
which is periodic with period $\tau$ and fixes three boundary points $0$ and $\pm\infty$ (here $\pm\infty$ are understood as infinity along two directions of $\ell$). In the same way for which the map $\phi_{1,2}$ was defined, we can define a quasiconformal map
$$\phi_{2,2}: \widehat{S}_2\to\widehat{T}_2.$$
We omit details here. For later purposes, we put
$$S=\overline{S_1\medcup S_2}\medcap\c_{\alpha},\,\quad~\,T=\overline{T_1\medcup T_2}\medcap V,$$
and
$$\widehat{S}=\overline{\widehat{S}_1\medcup \widehat{S}_2}\medcap\c_{\alpha},\,\quad~\, \widehat{T}=\overline{\widehat{T}_1\medcup \widehat{T}_2}\medcap V.$$
Moreover,
$$V_{\alpha}=\c_{\alpha}\setminus \left(S\medcup\widehat{S}\right) =\c_{\alpha}\setminus \left(T\medcup\widehat{T}\right).$$

Now we consider restrictions of the above constructed maps on the domain $\c_{\alpha}$. More precisely, we define
\begin{equation}\label{phi1}
\begin{aligned}
	\phi_{1}: S &\longrightarrow T;~\,~	z\longmapsto 
	\begin{cases}
~\phi_{1,1}(z),\,&\text{if~}z\in S_1,\\
~\phi_{1,2}(z),\,&\text{if~}z\in S_2.
\end{cases}
\end{aligned}
\end{equation}
In a similar way, we have
\begin{equation}\label{phi2}
\begin{aligned}
	\phi_{2}: \widehat{S} &\longrightarrow \widehat{T};~\,~z\longmapsto 
	\begin{cases}
~\phi_{2,1}(z),\,&\text{if~}z\in \widehat{S}_1,\\
~\phi_{2,2}(z),\,&\text{if~}z\in \widehat{S}_2.
\end{cases}
\end{aligned}
\end{equation}
Finally, we define
\begin{equation}\label{phi3}
\begin{aligned}
	\phi_{3}: V_{\alpha} &\longrightarrow V_{\alpha},\\
	z&\longmapsto z.
\end{aligned}
\end{equation}

Note that the half-strips $S$ and $\widehat{S}$ are overlapped on a parallelogram 
\begin{equation}\label{omitpara}
Q=S\cap \widehat{S}
\end{equation}
which contains the origin on the boundary. Both $\phi_1$ and $\phi_2$ are defined on $Q$, but they do not necessarily coincide there. Away from $Q$, in $\c_{\alpha}\setminus Q$ we have a well defined quasiconformal map, which is $\phi_1$ in $S\setminus Q$, $\phi_2$ in $\widehat{S}\setminus Q$ and identity on $V_{\alpha}$. However, this quasiconformal map still does not satisfy our requirement to remove the aforementioned discontinuity. To proceed, we need to change $\phi_1$ or $\phi_2$ further. We will change $\phi_1$ in the following.

To this aim, we first define suitable inverse branches of $\wp$. Note that $\wp(\tilde{\beta}_i)=\gamma$. Let $\psi_i$ denote an inverse branch of $\wp$ from $\gamma$ into $\tilde{\beta}_i$, for $i=1,2$.
Recall that $p: \c_{2\pi} \mapsto \c \setminus \Gamma$, where $p(z) = z^{1/\mu}$. Put $\tilde{p}(z) = p(z^{2\pi/\alpha})$ which maps $\c_{\alpha}$ onto $\c \setminus \Gamma$.  We can extend $\tilde{p}$ to its boundary continuously. Moreover, from the definition of $\mu$, we have that $x\in \mathbb{R}$ and
$\tau x \in \ell = \{ z: \arg(z) = \arg(\tau) \}$ both are mapped by $\tilde{p}$ onto the same point in $\Gamma$. So, in a sense, the ``transition function'' $t(x) = \tau x$ for $x \in \mathbb{R}$, identifies points on the lines bounding $\c_{\alpha}$ which corresponds to the same origin in $\Gamma$. Now the desired ``correction function'' is defined as
\begin{equation}\label{corr}
	\begin{aligned}
		\kappa:\, \mathbb{R} &\longrightarrow\mathbb{R},\\
		x&\longrightarrow \phi_{1}^{-1}\circ\psi_1\circ\wp\circ\phi_{2}(\tau x).
	\end{aligned}
\end{equation}
Since $\tau x \in\ell$, the above function $\kappa$ is well defined. Roughly speaking, the function $\kappa$ fixes the difference between $\phi_{1}$ and $\phi_{2}$. With this function, we can now remove the discontinuity by considering a linear interpolation between $\kappa$ on the real axis and the identity on the horizontal line $\ell_2$. More precisely, we define
\begin{equation}
	\xi_1(x+iy)=\left(1-\frac{y}{b}\right)\kappa(x)+\frac{y}{b}\,x+i\,y~\,\text{~for~}~\,0\leq y\leq b.
\end{equation}
This map is quasiconformal, as one can check that the Jacobian of $\xi$ is non-zero almost everywhere. Moreover, we define
\begin{equation}
	\xi_2(x+iy)=x+i\,y~\,\text{~for~}~\,y\geq b.
\end{equation}
In this way, we have just constructed a quasiconformal map of the upper half-plane by setting
\begin{equation}\label{xi}
	\begin{aligned}
		\xi: \mathbb{H}^+ &\longrightarrow \mathbb{H}^+;~\,~\, z\longmapsto 
		\begin{cases}
			~\xi_1(z),\,&\text{if~} 0\leq\Im(z)\leq b;\\
			~\xi_2(z),\,&\text{if~} \Im(z)\geq b.
		\end{cases}
	\end{aligned}
\end{equation}

Now we still need to define a map on the parallelogram $Q$. Denote by $I_1$ the side of $Q$ on real axis (i.e., $I_1=[0,d]$), by $I_2$ the side on the line $\ell$ (i.e., $I_2=\{re^{i\alpha}: 0\leq r\leq b/\sin(\alpha)\}$). The other two sides of $Q$ are denoted by $I_3$ which is the one parallel to $I_1$, and $I_4$ the one paralleling to $I_2$. Now we put $\hat{I}_1=\xi(I_1)$, $\hat{I}_2=I_2$, $\hat{I}_3=I_3$ and finally $\hat{I}_4=\xi(I_4)$. Then $\hat{I}_i$, for $i=1,\dots,4$, form a quadrilateral, denoted by $\hat{Q}$. Now we continue to define a new quadrilateral $\tilde{Q}$ formed by $\tilde{I}_i$ for $i=1,\dots,4$, where
$$\tilde{I}_i=\phi_{1}\left(\hat{I}_i\right)\,~\text{for}~\,i=1,\,4,$$
and
$$\tilde{I}_i=\phi_{2}\left(\hat{I}_i\right)\,~\text{for}~\,i=2,\,3.$$
Now we can define a boundary map between $Q$ and $\tilde{Q}$ by using $\phi_1\circ\xi$ on $I_1\cup I_4$ and $\phi_{2}\circ\xi$ on $I_2\cup I_3$. The boundary map extends to the interior of $Q$ quasiconformally; see \cite[Lemma 2.24]{branner3}. So we have a quasiconformal map
$$h: Q\to\tilde{Q}.$$

Now we can define our promised map
\begin{align}\label{mapfin}
	\Phi :~ \c_{\alpha}\,&\longrightarrow\, V;~\,~
	z\longmapsto
	\begin{cases}
		~\phi_{1}(\xi(z)),\,&\text{if~}z\in S\setminus Q,\\
		~\phi_{2}(z),\,&\text{if~}z\in \widehat{S}\setminus Q,\\
		~h(z),\,&\text{if~}z\in Q,\\
		~z,\,&\text{if~}z\in V_{\alpha}.
	\end{cases}
\end{align}
One can then check that the map 
\begin{equation}\label{defG}
G(z):=\wp\circ\Phi\circ h\circ q(z)
\end{equation}
extends continuously across the logarithmic spiral $\Gamma$ and thus gives us, by construction, a quasi-meromorphic function of the plane. 

So by the measurable Riemann mapping theorem (cf. \cite{ahlfors8}) there exist a quasiconformal homeomorphism $\Psi$ and a meromorphic function $f$ such that $G=f\circ\Psi$. By our construction, $G$ is quasiconformal only in $(h\circ q)^{-1}(W)$, where $W=\c_{\alpha}\setminus V_{\alpha}$.

It follows from our construction that $f$ has exactly $4$ critical values and no asymptotic values. In other words, $f\in\s_4$. Moreover, $f$ has only double poles.

\medskip
To derive some asymptotic properties of $f$, we will need the well known Teichm\"uller-Wittich-Belinskii theorem concerning conformality of a quasiconformal mapping at a point. We refer to \cite{ahlfors8, lehto1} for a background on quasiconformal mappings and also to this result. For our purpose, a stronger result would suffice. We first recall some relevant notions. Let $\varphi:\c\to\c$ be a quasiconformal mapping, the dilatation of $\varphi$ at a point $z$ is
$$K_{\varphi}(z):=\frac{|\varphi_z|+|\varphi_{\bar{z}}|}{|\varphi_z|-|\varphi_{\bar{z}}|}=\frac{1+|\mu_{\varphi}|}{1-|\mu_{\varphi}|},$$
where $\mu_{\varphi}:=\varphi_{\bar{z}}/\varphi_{z}$ is the complex dilatation of $\varphi$.

The above mentioned result is stated as follows; see \cite[Lemma 2.2]{ac1}.

\begin{proposition}\label{stwb}
Let $\varphi:\c\to\c$ be quasiconformal. Put $A=\{z\in\c:\mu_{\varphi}\neq 0\}$. If $\iint_{A\setminus\ud} dxdy/(x^2+y^2)<\infty$. Then $\varphi(z)\sim z$ as $z\to\infty$. Upon normalisation, one has $\varphi(z)=z+o(z)$ as $z\to\infty$.
\end{proposition}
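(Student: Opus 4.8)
The plan is to reduce the statement to the classical Teichm\"uller–Wittich–Belinskii theorem, the point being that the quantitative hypothesis $\iint_{A\setminus\ud} dx\,dy/(x^2+y^2)<\infty$ is exactly the integral condition (in logarithmic coordinates) that guarantees conformality of $\varphi$ at the point $\infty$. First I would pass to the coordinate $w=1/z$, i.e.\ consider $\widetilde{\varphi}(w)=1/\varphi(1/w)$, which is quasiconformal in a neighbourhood of $0$; the aim is to show $\widetilde\varphi$ is conformal at $0$ in the sense that $\widetilde\varphi(w)/w\to$ a nonzero limit as $w\to 0$, which after normalisation is exactly $\varphi(z)=z+o(z)$. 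The change of variables sends the region $A\setminus\ud$ where $\mu_\varphi\neq 0$ to a region near the origin, and under $z\mapsto 1/z$ one has $dx\,dy/(x^2+y^2)$ invariant (it is the hyperbolic-type area element $|dz|^2/|z|^2$), so the finiteness hypothesis is preserved in the new coordinate as $\iint \mu_{\widetilde\varphi}\neq 0} \, du\,dv/(u^2+v^2)<\infty$ near $0$.

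Next I would invoke the Teichm\"uller–Wittich–Belinskii theorem in its standard form (see \cite{lehto1}, or the version quoted from \cite{ac1}): if $g$ is quasiconformal in a punctured neighbourhood of $0$ and the complex dilatation satisfies $\iint |\mu_g(w)|\,du\,dv/(u^2+v^2)<\infty$ over the set where it is nonzero, then $g$ is conformal at $0$, meaning $g(w)/w$ tends to a finite nonzero limit. Note our hypothesis is even slightly stronger than needed: we only require the integral over the set $A$ where $\mu_\varphi\neq 0$ to be finite, without the weight $|\mu_\varphi|$, and since $|\mu_\varphi|$ is bounded (quasiconformality) the weighted integral is dominated by a constant times the unweighted one. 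Applying this to $g=\widetilde\varphi$ at $w=0$ gives $\widetilde\varphi(w)\sim \lambda w$ for some $\lambda\neq 0$, hence $\varphi(z)\sim z/\lambda$ as $z\to\infty$; replacing $\varphi$ by $\lambda\varphi$ (a harmless normalisation that does not change the dilatation) yields $\varphi(z)=z+o(z)$.

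The main obstacle — or rather the only genuine point requiring care — is the bookkeeping of the coordinate change: one must check that conjugating by $z\mapsto 1/z$ genuinely produces a quasiconformal map in a full neighbourhood of $0$ (using that $\varphi$ fixes $\infty$, which one may assume after composing with a M\"obius map that does not affect the asymptotics), that the dilatation transforms correctly so that $|\mu_{\widetilde\varphi}|=|\mu_\varphi\circ(1/w)|$ up to the usual unimodular factors coming from the derivative of $1/z$, and that the integral condition transports without loss. All of this is routine once the invariance of $|dz|^2/|z|^2$ under inversion is observed, and the substance of the statement is then entirely contained in the cited Teichm\"uller–Wittich–Belinskii-type result, so no further hard analysis is needed.
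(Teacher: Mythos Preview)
The paper does not actually prove Proposition~\ref{stwb}; it merely states the result and refers the reader to \cite[Lemma~2.2]{ac1} (with background references to \cite{ahlfors8, lehto1} for the Teichm\"uller--Wittich--Belinskii theorem itself). So there is no proof in the paper to compare against.

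Your proposal is correct and is precisely the standard route to this kind of statement: conjugate by $z\mapsto 1/z$ so that the question becomes conformality at the origin, use that the complex dilatation has invariant modulus under holomorphic coordinate changes and that the logarithmic area element $|dz|^2/|z|^2$ is invariant under inversion, and then invoke the classical Teichm\"uller--Wittich--Belinskii theorem at $0$. Your observation that the hypothesis (finite unweighted logarithmic area of the support of $\mu_\varphi$) dominates the usual weighted condition $\iint |\mu|\,|dz|^2/|z|^2<\infty$ because $|\mu_\varphi|\le k<1$ is exactly the point. The only cosmetic remark is that since $\varphi:\c\to\c$ is a quasiconformal homeomorphism of the plane, it automatically extends to fix $\infty$ on the sphere, so no preliminary M\"obius composition is needed there; the normalisation enters only at the end to absorb the nonzero derivative $\lambda$.
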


Note that $W$ is the union of two half-strips which means that the set $W\setminus\ud$ has finite logarithmic area, i.e., the integration above holds. So $\Psi$ satisfies the conditions in Proposition \ref{stwb}. So we have, up to normalization, 
\begin{equation}\label{iden}
\Psi(z)=z+o(z)\,~\,\text{as}~z\to\infty.
\end{equation}

\medskip
\noindent{\emph{Distribution of poles of $G$.}} Now fix $R>0$ large. We will be interested in counting the number of poles in the closed disk $\overline{D}(0,R)$. This will follow from the following area formula for a domain bounded by two logarithmic spirals. More precisely, let $\sigma$ be a (non-zero) complex number and $R>0$. Put $\psi(z)=z^{\sigma}$ for $z\in\c_{2\pi}$ with chosen principle branch. Let $\Gamma_{\beta}$ and $\Gamma_{\gamma}$ be the $\psi$-images of two radial lines of arguments $\beta$ and $\gamma$ respectively, where $0\leq\beta<\gamma\leq 2\pi$. See Figure \ref{area}. Denote by $A:=A_{\gamma-\beta}$ the bounded region bounded by $\Gamma_{\beta}$, $\Gamma_{\gamma}$ and the circle $\{z:|z|=R\}$, and by $B:=B_{\gamma-\beta}$ the preimage of $A$ under $\psi$. Then we have the following formula for the (Euclidean) area of $A$ and $B$.

\begin{figure}[htbp] 
	\centering
	\includegraphics[width=13cm]{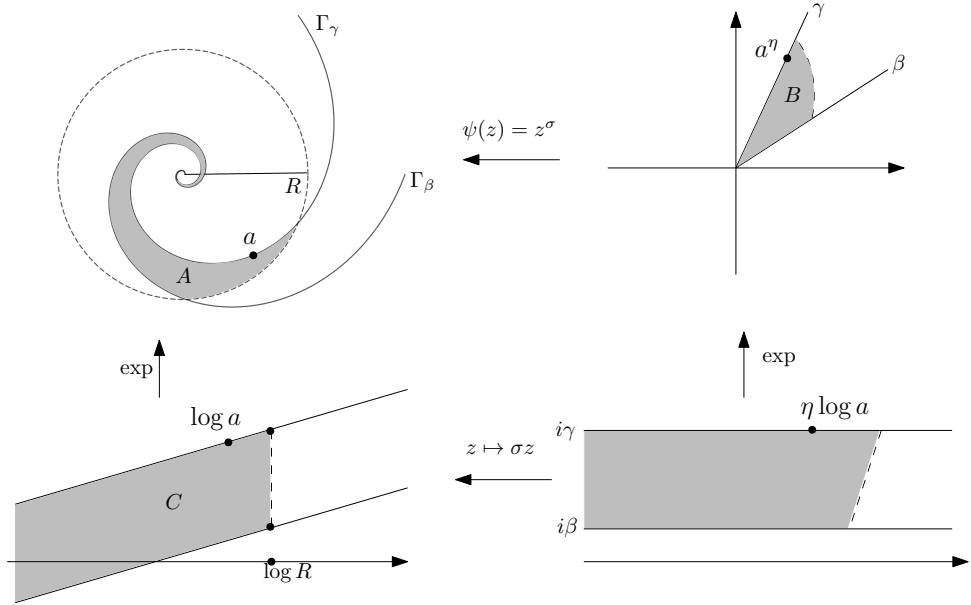}
	\caption{The area of the domain $A$ bounded by two logarithmic spirals can be computed by using a logarithmic change of variable.}
	\label{area}
\end{figure}
\begin{proposition}\label{aes}
\begin{align}
\area A&=\frac{(\gamma-\beta)|\sigma|^2}{2\re \sigma}R^2;\\
\area B&=\frac{\re \sigma}{4\Im\sigma}\left(e^{2\gamma\frac{\Im\sigma}{\re\sigma}} -e^{2\beta\frac{\Im\sigma}{\re\sigma}}\right)\cdot R^{\frac{2}{\re(\sigma)}}.
\end{align}
\end{proposition}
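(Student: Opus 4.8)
The plan is to compute both areas by the logarithmic change of variables indicated in Figure~\ref{area}. Write $z=e^{\zeta}$ with $\zeta=u+iv$, so that $dx\,dy=e^{2u}\,du\,dv$. The region $B$ is cut out by the two rays $\{\arg z=\beta\}$ and $\{\arg z=\gamma\}$ together with $\{|z^{\sigma}|\le R\}$; since $|z^{\sigma}|=e^{\re(\sigma\log z)}=e^{(\re\sigma)u-(\Im\sigma)v}$, its image in the $(u,v)$-plane is the skew half-strip
\[
\widetilde{B}=\bigl\{\,u+iv:\ \beta\le v\le\gamma,\ (\re\sigma)u-(\Im\sigma)v\le\ln R\,\bigr\}.
\]
Because $\re\sigma>0$ --- a positivity that holds in all of our applications and is exactly what makes $B$ bounded --- the set $\widetilde{B}$ is unbounded only in the direction $u\to-\infty$, i.e.\ towards the vertex $z=0$. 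Under $\log\psi(z)=\sigma\log z$ the region $B$ corresponds, in the logarithm of the $w$-variable, to the affine image $\sigma\cdot\widetilde{B}$; exponentiating, this is precisely the bounded region $A$ cut off by the spirals $\Gamma_{\beta},\Gamma_{\gamma}$ and the circle $\{|w|=R\}$.

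For the area of $B$ I would integrate directly:
\[
\area B=\iint_{\widetilde{B}}e^{2u}\,du\,dv=\int_{\beta}^{\gamma}\!\left(\int_{-\infty}^{(\ln R+(\Im\sigma)v)/\re\sigma}e^{2u}\,du\right)dv=\frac{R^{2/\re\sigma}}{2}\int_{\beta}^{\gamma}e^{2(\Im\sigma)v/\re\sigma}\,dv,
\]
and the last integral equals $\frac{\re\sigma}{2\Im\sigma}\bigl(e^{2\gamma\Im\sigma/\re\sigma}-e^{2\beta\Im\sigma/\re\sigma}\bigr)$, giving the claimed formula (the case $\Im\sigma=0$ being the obvious limit).

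For the area of $A$ I would use that $\psi(z)=z^{\sigma}$ is everywhere locally conformal on $\c_{2\pi}$, so $\area A=\iint_{B}|\psi'(z)|^{2}\,dx\,dy$ (an honest area when $\psi|_{B}$ is injective, and the area counted with multiplicity in general). Since $|\psi'(z)|^{2}=|\sigma|^{2}e^{2((\re\sigma-1)u-(\Im\sigma)v)}$, the same change of variables gives
\[
\area A=|\sigma|^{2}\iint_{\widetilde{B}}e^{2((\re\sigma)u-(\Im\sigma)v)}\,du\,dv=|\sigma|^{2}\int_{\beta}^{\gamma}e^{-2(\Im\sigma)v}\left(\int_{-\infty}^{(\ln R+(\Im\sigma)v)/\re\sigma}e^{2(\re\sigma)u}\,du\right)dv.
\]
The inner integral equals $\frac{R^{2}}{2\re\sigma}e^{2(\Im\sigma)v}$, the exponential in $v$ cancels, and integrating the constant $\frac{R^{2}}{2\re\sigma}$ over $[\beta,\gamma]$ yields $\area A=\frac{(\gamma-\beta)|\sigma|^{2}}{2\re\sigma}R^{2}$.

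Each computation is only one line; the one point that needs care is the identification step in the first paragraph --- fixing mutually compatible branches of $\log$ and of $z^{\sigma}$ on $\c_{2\pi}$, verifying that $\widetilde{B}$ is exactly the region written above, and being explicit that ``the bounded region bounded by $\Gamma_{\beta},\Gamma_{\gamma}$ and $\{|w|=R\}$'' should be read as the image (with multiplicity, if the spirals wind enough to overlap) of $\sigma\cdot\widetilde{B}$. Once the geometry is pinned down, nothing else is required.
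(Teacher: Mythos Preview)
Your proof is correct and uses essentially the same logarithmic change of variable as the paper. The only cosmetic difference is that you work in $\log z$-coordinates (the region $\widetilde{B}$) and obtain $\area A$ via the Jacobian of $\psi$, whereas the paper works in $\log w$-coordinates (the region $C=\sigma\,\widetilde{B}$), integrating $e^{2x}$ over $C$ to get $\area A$ and then using the map $\zeta\mapsto e^{\zeta/\sigma}$ for $\area B$; the arithmetic is identical.
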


\begin{proof}
	Let $C$ be the shaded domain in Figure \ref{area}, which is mapped by the exponential map to $A$. Then
	$$\area A=\iint_{C}|(e^{z})'|^2|dz|^2=\int_{-\infty}^{\log R}e^{2x}\int_{\frac{\beta-x\Im(1/\sigma)}{\re(1/\sigma)}}^{\frac{\gamma-x\Im(1/\sigma)}{\re(1/\sigma)}}dydx=\frac{(\gamma-\beta)|\sigma|^2}{2\re \sigma}R^2.$$
	Similarly, by using the map $e^{z/\sigma}$ we can obtain the area formula for $B$. We omit details here.\qedhere
\end{proof}

We will also use in the sequel some standard notions and notations from Nevanlinna theory \cite{nevanlinna, goldbergmero, hayman1}. In particular, $n(r,f)$ denotes the number of poles of $f$ in the disk $\overline{D}(0,r)$, $m(r,f)$ the proximity function and $T(r,f)$ is the Nevanlinna characteristic of $f$. The order of a meromorphic function $f$ is defined by
$$\rho(f)=\limsup_{r\to\infty}\frac{\log T(r,f)}{\log r}.$$

By using Proposition \ref{aes}, we have the following estimate.

\begin{proposition}\label{order}
\begin{subequations}
  \begin{equation}\label{counting}
n(r,f)=\mathcal{O}\left(r^{\frac{\alpha^2+(\log|\tau|)^2}{\pi\alpha}}\right) ,
\end{equation}
  \begin{equation}\label{orderg}
\rho(f)=\frac{\alpha^2+(\log|\tau|)^2}{\pi\alpha}  \end{equation}
\end{subequations}
\end{proposition}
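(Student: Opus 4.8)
The plan is to count poles of $G$ in $\overline{D}(0,R)$ using the geometry of the construction, transfer this count to $f$ via the asymptotic normalisation $\Psi(z)=z+o(z)$ from \eqref{iden}, and then read off the order of $f$ from the growth rate of $n(r,f)$.

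First I would trace through which points become poles. The poles of $G=\wp\circ\Phi\circ h\circ q$ in $\c\setminus\Gamma$ are the points $z$ with $h\circ q(z)$ lying in the preimage under $\Phi$ of a lattice point of $\wp$; since $\Phi$ is a quasiconformal self-map of the sector that is the identity on $V_\alpha$ (and a bounded perturbation elsewhere), the lattice $\Lambda=\mathbb{Z}+\tau\mathbb{Z}$ of poles of $\wp$ inside $\c_\alpha$ is essentially preserved up to a bounded displacement. So the poles of $G$ in $\overline{D}(0,R)$ correspond, up to a bounded count, to lattice points of $\Lambda$ inside $(h\circ q)(\overline{D}(0,R)\setminus\Gamma)$. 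Now $q(z)=z^{\mu}$ maps $\overline{D}(0,R)$ into $\c_{2\pi}$, landing in a disk of radius comparable to $R^{\re\mu}=R$ (recall $\re\mu=1$) after a logarithmic spiral distortion, and then $h(w)=w^{\alpha/2\pi}$ maps this into $\c_\alpha$, shrinking moduli to radius $\asymp R^{\alpha/(2\pi)\cdot|\mu|\cdot(\text{correction})}$. The cleanest route is to compose: $h\circ q$ has the form $z\mapsto z^{\alpha\mu/(2\pi)}=z^{\eta}$ on $\c_{2\pi}$, so $(h\circ q)$ maps $\overline{D}(0,R)$ onto a region bounded by two logarithmic spirals and a circular-type arc, namely the region $A$ of Proposition \ref{aes} with $\sigma=\alpha\mu/(2\pi)$, $\beta=0$, $\gamma=2\pi$. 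Hence the number of lattice points of $\Lambda$ in this region equals $\area A / \area(\text{fundamental domain of }\Lambda) + \mathcal{O}(\text{boundary length})$, and Proposition \ref{aes} gives $\area A = \dfrac{2\pi|\sigma|^2}{2\re\sigma}R^2$. But this is the area in the $\c_\alpha$-plane; I actually need the count in the $z$-plane, so the right move is to instead apply the $B$-formula: the preimage under $\psi(z)=z^{\sigma}$ of a disk of radius $\asymp 1$ worth of lattice cells. More carefully: cover $\c_\alpha$ by the lattice $\Lambda$, pull back each cell by $(h\circ q)^{-1}(z)=z^{1/\sigma}$ (with $\sigma=\eta=\alpha\mu/(2\pi)$); a cell at distance $t$ from the origin pulls back to a set of diameter $\asymp t^{1/\re\sigma - 1}$ and the lattice points with $|{\cdot}|\le \rho_0$ (for $\rho_0\asymp R^{\re\sigma}$, since $(h\circ q)$ of $\overline D(0,R)$ reaches modulus $\asymp R^{\re\sigma}$) number $\asymp \rho_0^2 \asymp R^{2\re\sigma}$. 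Thus
$$
n(R,G) \asymp R^{2\re\sigma} = R^{2\cdot\frac{\alpha}{2\pi}\re\mu \;-\;\text{(spiral term)}}.
$$
Plugging $\sigma=\dfrac{\alpha}{2\pi}\Big(1-i\dfrac{\log|\tau|}{\alpha}\Big)=\dfrac{\alpha}{2\pi}-i\dfrac{\log|\tau|}{2\pi}$ gives $2\re\sigma=\dfrac{\alpha}{\pi}$, which is not yet \eqref{counting}; the discrepancy shows that the exponent must come out of $\area B$ with its $R^{2/\re(\sigma)}$ factor applied to the appropriate $\sigma$. The correct bookkeeping is: the \emph{image} region $(h\circ q)(\overline D(0,R))$ is $A$ for $\sigma = 1/\eta$ acting on the $R$-disk — equivalently $\overline D(0,R)$ itself is the $B$-region for the map $z\mapsto z^{\eta}$ — so that the number of $\Lambda$-points inside the image is $\asymp \area\big((h\circ q)(\overline D(0,R))\big)$, and by Proposition \ref{aes} (the $\area A$ formula with $\sigma\rightsquigarrow \eta$ and $R\rightsquigarrow R^{\re\eta}$... ) one lands on $R^{(\alpha^2+(\log|\tau|)^2)/(\pi\alpha)}$ once $|\eta|^2=\frac{\alpha^2}{4\pi^2}\cdot\frac{\alpha^2+(\log|\tau|)^2}{\alpha^2}$ and $\re\eta=\frac{\alpha}{2\pi}$ are inserted into $\frac{2\pi|\eta|^2}{2\re\eta}$.

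Granting the pole count \eqref{counting} for $G$, I would then pass to $f$. Since $G=f\circ\Psi$ with $\Psi(z)=z+o(z)$, and $\Psi$ is a homeomorphism of $\c$, the poles of $f$ in $\overline D(0,r)$ are the $\Psi$-images of poles of $G$, so $n(r,f)=n\big(\Psi^{-1}(\overline D(0,r)),G\big)$; the normalisation forces $\Psi^{-1}(\overline D(0,r))\subseteq \overline D(0,(1+o(1))r)$, whence $n(r,f)\le n((1+o(1))r,G)=\mathcal O(r^{\rho_0})$ with $\rho_0=(\alpha^2+(\log|\tau|)^2)/(\pi\alpha)$, and the matching lower bound the same way — this proves \eqref{counting}. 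For the order \eqref{orderg}: the lower bound $\rho(f)\ge \rho_0$ follows because $N(r,f)\ge \int^r n(t,f)\,dt/t$ and a matching lower bound for $n(r,f)$ (the lattice-counting is two-sided) give $T(r,f)\ge N(r,f)\gtrsim r^{\rho_0}$ along a sequence. For the upper bound, $N(r,f)=\mathcal O(r^{\rho_0}\log r)$ from \eqref{counting}, while the proximity term $m(r,f)$ is controlled because $f$ has no asymptotic values and only double poles — concretely, $f$ (like $\wp$) is bounded on the complement of small disks around its poles, so $m(r,f)=\mathcal O(\log r)$; hence $T(r,f)=\mathcal O(r^{\rho_0}\log r)$ and $\rho(f)\le\rho_0$. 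Combining the two bounds yields \eqref{orderg}.

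The main obstacle is the first step: pinning down the exponent of $R$ in the pole count with the exact constant $(\alpha^2+(\log|\tau|)^2)/(\pi\alpha)$. This requires care about (i) which direction the power map $z^{\eta}$ versus $z^{1/\eta}$ is applied when converting between "area of image region" and "number of lattice cells", (ii) checking that the quasiconformal corrections $\Phi$ and $\Psi$ only perturb the lattice and the disk radius by bounded/lower-order amounts so they do not affect the exponent (here Proposition \ref{stwb} and the fact that $\Phi=\mathrm{id}$ on $V_\alpha$ do the work), and (iii) controlling the boundary contribution in the lattice count, i.e. that the number of cells meeting the boundary of the image region is of smaller order than its area — which holds because the boundary consists of two logarithmic spirals of length $\mathcal O(R^{\re\eta})$ and an arc, all of lower order than $R^{2\re\eta}$ provided $\re\eta>0$. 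Once these are in place, the identity $2\cdot\frac{2\pi|\eta|^2}{2\cdot 2\re\eta}\Big|_{\eta=\frac{\alpha}{2\pi}-i\frac{\log|\tau|}{2\pi}}$ (interpreting the area-of-$A$ formula with the substitution appropriate to counting preimages) collapses to $\frac{\alpha^2+(\log|\tau|)^2}{\pi\alpha}$, giving \eqref{counting} and then \eqref{orderg}.
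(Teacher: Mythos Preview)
Your overall strategy matches the paper's: count poles of $G$ by comparing the area of the $(h\circ q)$-image of the $R$-disk with the area of a fundamental cell of $\wp$, transfer to $f$ via $\Psi(z)=z+o(z)$, and then read off $\rho(f)$ from $n(r,f)$. However, two of your steps are not carried through correctly.

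\textbf{The exponent.} Your back-and-forth between $\sigma=\eta$ and $\sigma=1/\eta$ never settles, and the closing ``identity'' $2\cdot\frac{2\pi|\eta|^2}{2\cdot 2\re\eta}$ simplifies to $\frac{\alpha^2+(\log|\tau|)^2}{2\alpha}$, not to $\frac{\alpha^2+(\log|\tau|)^2}{\pi\alpha}$. The clean bookkeeping is this: $n(R,G)$ is, up to constants, the Euclidean area of the image region $(h\circ q)(\overline D(0,R))\subset\c_\alpha$. In the notation of Proposition~\ref{aes} this is the \emph{$B$-region} for the map $\psi(w)=w^{1/\eta}$ (which sends $\c_\alpha$ back to the spiral-slit plane and the image region back to the $R$-disk), so the relevant exponent is $2/\re(1/\eta)$. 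Since $\re(1/\eta)=\re\eta/|\eta|^2=\dfrac{2\pi\alpha}{\alpha^2+(\log|\tau|)^2}$, one obtains
\[
n(R,G)\;\asymp\;R^{2/\re(1/\eta)}\;=\;R^{\,2|\eta|^2/\re\eta}\;=\;R^{\frac{\alpha^2+(\log|\tau|)^2}{\pi\alpha}}.
\]
So the missing factor in your formula is exactly the use of $\re(1/\eta)$ rather than a hybrid of the $A$-coefficient and an unspecified exponent.

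\textbf{The proximity term.} Your upper bound for $\rho(f)$ rests on ``$f$ is bounded on the complement of small disks around its poles, so $m(r,f)=\mathcal O(\log r)$''. This is not obvious for $f$: it is a global statement about a function obtained from a surgery, not merely a local one about $\wp$. The paper does \emph{not} argue this way; it invokes a theorem of Teichm\"uller \cite{teichmuller2} stating that for $f\in\s$ with $\infty$ not an asymptotic value and uniformly bounded pole multiplicities, $m(r,f)$ is bounded. That result (or an equivalent argument) is a genuine ingredient you have not supplied; without it the inequality $\rho(f)\le\rho_0$ is unproved. Once Teichm\"uller's theorem gives $m(r,f)=\mathcal O(1)$, the standard comparison between $N(r,f)$ and $n(r,f)$ yields $\rho(f)=\limsup_{r\to\infty}\frac{\log n(r,f)}{\log r}$ and \eqref{orderg} follows from \eqref{counting}.
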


\begin{proof}
	To estimate the number of poles of $f$ in a certain disk, by \eqref{iden} it suffices to estimate the poles for the map $G$ for sufficiently large disk. This can be obtained by comparing the area of the disk and the area of a preimage of a parallelogram for Weierstra{\ss} elliptic function under the spiral function $h \circ q(z) = z^{\eta}$, where $\eta = \mu \alpha/2\pi$. Then it follows from Proposition \ref{aes} that
	$$n(r,G)\sim C\,r^{\frac{\alpha^2+(\log|\tau|)^2}{\pi\alpha}},$$
	for large $r$ and for some positive constant $C$. This gives \eqref{counting}.

To obtain \eqref{orderg}, we need a result of  Teichm\"uller \cite{teichmuller2} which states that for a meromorphic function $f\in\s$, if $\infty$ is not an asymptotic value and the multiplicities of poles are bounded, then $m(r,f)$ is bounded. This means that the order of $f$ can be estimated by using the integrated counting function $N(r,f)$ of $f$, where $N(r,f)=T(r,f)-m(r,f)$. Then classical results on the comparison on the growth scale of $N(r,f)$ and $n(r,f)$ shows that 
$$\rho(f)=\limsup_{r\to\infty}\frac{\log n(r,f)}{\log r}.$$
Now \eqref{orderg} follows from our estimate of $n(r,f)$.
\end{proof}

\medskip
In the above constructions, we have from the beginning fixed the parameters $\alpha$ and $\tau$. Now if we vary these parameters, we can achieve every finite order. To be more specific, we have the following.
\begin{proposition}\label{exf}
For any given $\rho\in(0,\infty)$, there exist $\alpha\in (0,2\pi]$ and $\tau$ satisfying \eqref{anre} such that
\begin{equation}\label{dfrho}
\rho=\frac{\alpha^2+(\log|\tau|)^2}{\pi\alpha}.
\end{equation}
Moreover, there is a meromorphic function $f\in\s_4$ such that $\rho(f)=\rho$.
\end{proposition}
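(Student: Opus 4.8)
The plan is to establish Proposition \ref{exf} in two steps: first a purely elementary analysis of the map $(\alpha,\tau)\mapsto \rho(\alpha,\tau) := (\alpha^2+(\log|\tau|)^2)/(\pi\alpha)$ showing that every positive real $\rho$ is in its range over the admissible parameter set, and then invoking the construction of Section \ref{thecon} together with Proposition \ref{order} to produce the corresponding function $f\in\s_4$.

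For the first step, observe that for fixed $\alpha\in(0,2\pi]$ the quantity $\log|\tau|$ is a free real parameter: the constraint \eqref{anre} only restricts $\arg\tau$ (to $\alpha$ or $\alpha-\pi$), leaving $|\tau|\in(0,\infty)$ completely unconstrained, so $(\log|\tau|)^2$ ranges over all of $[0,\infty)$. Hence for any fixed $\alpha$ the function $\log|\tau|\mapsto \rho(\alpha,\tau)$ is continuous with minimum value $\alpha/\pi$ attained at $|\tau|=1$ and tends to $+\infty$ as $|\tau|\to 0$ or $|\tau|\to\infty$; by the intermediate value theorem its range is exactly $[\alpha/\pi,\infty)$. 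Now letting $\alpha\to 0^+$ we get $\alpha/\pi\to 0$, so the union over $\alpha\in(0,2\pi]$ of these ranges is $(0,\infty)$. Thus given $\rho\in(0,\infty)$ one chooses, say, $\alpha=\min\{\pi\rho,\pi\}$ (so that $\alpha/\pi\le\rho$) and then solves $(\log|\tau|)^2=\pi\alpha\rho-\alpha^2\ge 0$ for $|\tau|$, with $\arg\tau$ dictated by \eqref{anre}; one should also note the degenerate case $\alpha=\pi$ (where the construction simplifies to $\wp(\sqrt z)$) can be avoided by the freedom in choosing $\alpha$, and the case $\alpha=2\pi$ needs $\rho\ge 2$, which is consistent. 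This yields \eqref{dfrho}.

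For the second step, with $\alpha$ and $\tau$ now fixed as above, the entire construction of Section \ref{thecon} applies verbatim: one forms $g_1=\wp\circ h$, introduces the spiral map $p$ with $\mu$ as in \eqref{mudef}, builds the quasiconformal correction $\Phi$ of \eqref{mapfin} so that $G=\wp\circ\Phi\circ h\circ q$ of \eqref{defG} extends continuously across $\Gamma$, and then applies the measurable Riemann mapping theorem to obtain a meromorphic $f$ with $G=f\circ\Psi$, which by construction lies in $\s_4$ (four critical values, no asymptotic values, only double poles). Proposition \ref{order}, equation \eqref{orderg}, then gives $\rho(f)=(\alpha^2+(\log|\tau|)^2)/(\pi\alpha)=\rho$, completing the proof.

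I do not expect a genuine obstacle here: this proposition is essentially a bookkeeping corollary of the work already done. The only points requiring a little care are (i) checking that the parameter choice can always be made to respect \eqref{anre} and to avoid the excluded value $\alpha=\pi$ (handled by perturbing $\alpha$ slightly if $\pi\rho=\pi$, since the range $[\alpha/\pi,\infty)$ varies continuously and overlaps for nearby $\alpha$), and (ii) confirming that Proposition \ref{order} was proved for an \emph{arbitrary} admissible pair $(\alpha,\tau)$ and not just a fixed one — which it was, since nothing in Section \ref{thecon} used a special value. So the "hard part", such as it is, was already dispatched in Proposition \ref{order}; here it is merely a matter of surjectivity of an explicit two-variable function and citing the construction.
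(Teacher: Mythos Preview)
Your proposal is correct and is exactly what the paper intends: in fact the paper gives no proof of this proposition at all, treating it as an immediate consequence of the construction of Section~\ref{thecon} together with Proposition~\ref{order}, once one observes that the right-hand side of \eqref{dfrho} can be made to assume any positive value by varying $(\alpha,\tau)$. Your explicit surjectivity argument via the intermediate value theorem, and your care about avoiding the degenerate value $\alpha=\pi$, simply fill in details the authors left implicit.
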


Now we put
\begin{equation}\label{defeta}
\eta=\frac{\alpha\mu}{2\pi}=\frac{1}{2\pi}\left(\alpha-i\log|\tau|\right).
\end{equation}
Later on we will also need to estimate $|a^{\eta}|$ in terms of the modulus of $a$. Note that $\eta$ defined above is a complex number. We first prove the following result. For simplicity, we also use $\rho$ as given in \eqref{dfrho}.

\begin{proposition}\label{lengthestimate}
For $a\in\c$, we have
$$\left|a^{\eta} \right|=\mathcal{O}\left(|a|^{\rho/2}\right).$$
\end{proposition}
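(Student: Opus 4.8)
The plan is to write $a^{\eta}$ in polar form and estimate its modulus directly, using the explicit expression for $\eta$ from \eqref{defeta}. Write $a=|a|e^{i\theta}$ with $\theta\in(-\pi,\pi]$ (the principal branch), so that $a^{\eta}=\exp(\eta\log a)=\exp\bigl(\eta(\log|a|+i\theta)\bigr)$. Since $\eta=\frac{1}{2\pi}(\alpha-i\log|\tau|)$, we compute
\[
\re(\eta\log a)=\frac{1}{2\pi}\bigl(\alpha\log|a|+\theta\log|\tau|\bigr),
\]
and therefore $|a^{\eta}|=\exp\bigl(\re(\eta\log a)\bigr)=|a|^{\alpha/(2\pi)}\cdot e^{\theta\log|\tau|/(2\pi)}$.

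Next I would bound the second factor: since $|\theta|\le\pi$, we have $\bigl|e^{\theta\log|\tau|/(2\pi)}\bigr|\le e^{|\log|\tau||/2}=\max\{|\tau|,|\tau|^{-1}\}^{1/2}$, which is a constant depending only on $\tau$ (fixed in the construction). Hence $|a^{\eta}|=\mathcal{O}\bigl(|a|^{\alpha/(2\pi)}\bigr)$, with the implied constant depending on $\tau$.

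It then remains to check that $\alpha/(2\pi)\le\rho/2$, i.e. that $\alpha/\pi\le\rho$. Using the formula \eqref{dfrho} for $\rho$, this is
\[
\frac{\alpha}{\pi}\le\frac{\alpha^2+(\log|\tau|)^2}{\pi\alpha}=\frac{\alpha}{\pi}+\frac{(\log|\tau|)^2}{\pi\alpha},
\]
which holds trivially since $(\log|\tau|)^2/(\pi\alpha)\ge 0$. Combining this with the previous bound gives $|a^{\eta}|=\mathcal{O}\bigl(|a|^{\alpha/(2\pi)}\bigr)=\mathcal{O}\bigl(|a|^{\rho/2}\bigr)$, as required.

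There is essentially no serious obstacle here; the statement is a routine computation with the principal branch of the power map. The only mild subtlety is keeping track of which branch of $\log$ (and hence of $z\mapsto z^{\eta}$) is intended — one must use a branch on which the argument stays bounded, so that the factor $e^{\theta\log|\tau|/(2\pi)}$ is uniformly bounded; this is exactly the principal branch fixed in Section \ref{thecon}. One should also note that the estimate is only claimed up to a multiplicative constant, so the boundedness of the angular factor is all that is needed and no cancellation is required.
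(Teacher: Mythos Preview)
Your computation is correct for the principal branch, but that is not the branch used in the paper. The map $z\mapsto z^{\eta}$ arising in Section~\ref{thecon} is the composite $h\circ q$, where $q=p^{-1}$ is the \emph{inverse} of the spiral map $p(w)=w^{1/\mu}$ defined on $\c_{2\pi}$. Unwinding the definitions, one has $a^{\eta}=\exp(\eta\log_P a)$, where $\log_P$ is the ``spiral branch'' of the logarithm sending $\c\setminus\Gamma$ onto the oblique strip $P=(1/\mu)\{0<\Im w<2\pi\}$. On this branch the argument is \emph{not} bounded: writing $r=|a|$ and $\kappa=(\log|\tau|)/\alpha$, the imaginary part of $\log_P a$ lies in $[\kappa\log r,\kappa\log r+2\pi)$ and hence grows like $\kappa\log r$ as $r\to\infty$. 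So your key step---bounding $e^{\theta\log|\tau|/(2\pi)}$ by a constant because $|\theta|\le\pi$---fails outright for the branch that is actually in play.

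With the correct branch the extra factor $e^{\theta(r)\log|\tau|/(2\pi)}$ contributes a genuine power $r^{(\log|\tau|)^2/(2\pi\alpha)}$, and one gets $|a^{\eta}|\asymp|a|^{\rho/2}$ on the nose (this is what the paper's proof shows, and it is exactly what is needed in Section~\ref{section5}, where the two-sided estimate $|b_j|\sim|a_j|^{1-\rho/2}$ is used). Your principal-branch estimate $|a^{\eta}|=\mathcal{O}(|a|^{\alpha/(2\pi)})$ is a true statement about a different quantity; it neither proves the proposition for the intended branch nor suffices for the later application. The fix is precisely to track how the argument grows along the spiral, as the paper does.
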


\begin{proof}
Let $S$ be the standard strip $S = \{ z= x + i y : 0 < y  < 2\pi \}$ and put $P = (1/\mu) S = \{ z :  \mu z \in S \}$. First, recall that $\exp$ maps the oblique strip $P$ onto $\c \setminus \Gamma$. It has a continuous extension to the boundary, by the definition of $\mu = 1-i \kappa$ (whose real part is equal to $1$). Here $\kappa=(\log|\tau|)/\alpha$. To make it injective, let us extend $\exp$ only to the lower boundary of the strip $P$, i.e. so that, for instance, the preimage of $z=1$ under exp has its preimage at the origin, i.e. the argument of this preimage is $0$. Then the argument of an arbitrary complex number $a \in \mathbb{C}$ is the imaginary part of the ``spiral branch'' $\log_P(a)$ of $a$; i.e. $\log_p$ is this particular inverse of $\exp$ mapping $\c \setminus \Gamma$ onto $P$.  

  By definition $a^{\mu} = e^{\mu \log a} = e^{(1-i\kappa) (\ln r + i \theta(r))}$, where $\theta=\theta(r)$ now depends on $r:=|a|$ in such a way that, if $\tilde{\theta} \in [0,2\pi)$ is the natural argument for $a$, then $\theta(r) = \tilde{\theta}  + 2\pi k $, where $k$ is the unique integer such that
  \[
\tilde{\theta} + 2\pi k \in [\kappa \log r, \kappa \log r + 2\pi). 
\]
Hence there is some $\delta \in [0,2\pi)$ such that
\[
k = \frac{\kappa \log r - \tilde{\theta} + \delta}{2\pi}.
\]
Replacing the expression for $k$ in $\theta(r)$, the computation goes, with $\kappa = (\log|\tau|)/\alpha$, 
\begin{equation}
 |a^{\eta}|= |a^{\mu \alpha/2\pi}| = r^{\alpha/2\pi} e^{\theta(r) \log|\tau|/2\pi} 
   = r^{\frac{\alpha^2 + (\log|\tau|)^2}{2\pi \alpha}} e^{\frac{\delta \log |\tau|}{2 \pi}}.\qedhere
\end{equation}
  \end{proof}

\bigskip
The strategy below follows in a similar way as we have done in \cite[Section 3]{ac1}. We first estimate asymptotic behaviours of the constructed function $f$ near its poles, which will be useful in estimating the Hausdorff dimension of escaping sets later on. Then we show that by varying parameters $\alpha$ and $\tau$ we obtain functions in $\s_4$ which are actually equivalent. This is crucial for completing the proof of Theorem \ref{incm}.

\medskip
\noindent{\emph{Local behaviours near poles}.} We first consider the local behaviours of the quasi-meromorphic map $G$ near its poles. In the following, $A\sim B$ means that $A$ and $B$ have comparable modulus. Let $z_0$ be a pole of $G$. By \eqref{defG}, $\Phi(h(q(z_0)))$ is a pole of the function $\wp$. Put $\zeta=\Phi(h(q(z)))$ and $\zeta_0=\Phi(h(q(z_0)))$. Since $\zeta_0$ is a pole of $\wp$, we see that there exists a constant $C$ such that
\begin{equation}\label{ppb}
\wp(\zeta)\sim\left(\frac{C}{\zeta-\zeta_0}\right)^2\,~\,~\text{as~}\,~\,\zeta\to\zeta_0.
\end{equation}
By the construction of $\Phi$, we have that
$$\zeta-\zeta_0\sim C'\left( h(q(z))-h(q(z_0))\right)=C'\left( z^{\alpha\mu/(2\pi)}-z_{0}^{\alpha\mu/(2\pi)} \right)\,~\,\text{as}\,~\,z\to z_0,$$
where $C'$ is some constant. This, together with \eqref{defG} and \eqref{ppb} shows that
\begin{equation}\label{Gpb}
G(z)\sim\left(\frac{C''}{z^{\alpha\mu/(2\pi)}-z_{0}^{\alpha\mu/(2\pi)}}\right)^2\,~\,~\text{as~}\,~\,z\to z_0,
\end{equation}
where $C''$ depends only on $C$ and $C'$. Note that $z_0$ is a double pole of $G$. We may thus assume that
\begin{equation}\label{rpb}
G(z)\sim\left(\frac{a(z)}{z-z_0}\right)^2\,~\,~\text{as~}\,~\,z\to z_0
\end{equation}
for some function $a$ which is holomorphic near in some neighbourhood of $z_0$ and moreover, $a(z_0)\neq 0$. By comparing \eqref{Gpb} and \eqref{rpb}, we see that
$$a(z)\sim C''\frac{z-z_0}{z^{\alpha\mu/(2\pi)}-z_{0}^{\alpha\mu/(2\pi)}}\,~\,~\text{as~}\,~\,z\to z_0.$$
Recall that $\eta$ is defined in \eqref{defeta}. Now by taking limit and using L'Hospital's rule we obtain
$$a(z_0)\sim C''' z_{0}^{1-\alpha\mu/(2\pi)}=C''' z_{0}^{1-\eta}.$$

Denoted by $w=\Phi(z)$ and $w_0=\Phi(z_0)$. Then $w_0$ is a double pole of $f$. Assume that
$$f(w)\sim\left( \frac{b(w)}{w-w_0} \right)^2\,~\,~\text{as~}\,~\,w\to w_0.$$
Here $b(w)$ is a function holomorphic in $w_0$ and $b(w_0)\neq 0$. So with \eqref{rpb} we see that
$$b(w)\sim a(z)\frac{w-w_0}{z-z_0}$$
as $z\to z_0$. Note that \eqref{iden} implies that $w\to w_0$ as $z\to z_0$. Again, \eqref{iden} will also imply that
$$b(w_0)\sim a(z_0)\lim_{z\to z_0}\frac{w-w_0}{z-z_0}=a(z_0).$$
Now it follows that near a pole $w_0$ of $f$, we have the following asymptotic relation:
\begin{equation}\label{localbehaviours}
f(w)=\left( \frac{C_1 w_{0}^{1-\eta}}{w-w_0} \right)^2~\,~\text{as~}\,~\,w\to w_0,
\end{equation}
where $C_1$ is some constant.


\medskip

\noindent{\emph{Equivalence.}} By choosing distinct parameters $(\alpha_i,\tau_i)$ we may reach that $\rho_i$ as defined in \eqref{orderg} are different for $i=1,2$. So by the construction we can have two functions $f_i\in\s_4$ whose orders are $\rho_i$. Following the idea of proof given in \cite[Theorem 3.2]{ac1}, we see that $f_1$ is quasiconformally equivalent to $f_2$. We omit this proof here but only state the result below. We leave the details for interested readers.

\begin{proposition}\label{eqdo}
$f_1$ is quasiconformally equivalent to $f_2$.
\end{proposition}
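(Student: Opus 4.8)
The plan is to exhibit explicit quasiconformal maps $\varphi,\psi:\c\to\c$ with $\varphi\circ f_1=f_2\circ\psi$, using the fact that both $f_i$ were produced by composing one fixed Weierstra{\ss} function (or two Weierstra{\ss} functions that are themselves affinely conjugate) with the spiral and surgery maps attached to the parameters $(\alpha_i,\tau_i)$. First I would recall from the construction that $f_i=G_i\circ\Psi_i^{-1}$, where $G_i=\wp_i\circ\Phi_i\circ h_i\circ q_i$ and $\Psi_i$ is the integrating homeomorphism from the measurable Riemann mapping theorem. Since all four singular values of $f_i$ are the critical values $e_1,e_2,e_3,\infty$ of $\wp_i$, and the lattices can be normalized so that the two $\wp$'s differ only by an affine map of the plane (both have a prime period $1$ and a prime period on the appropriate ray, with the same cross-ratio of critical values up to the ambiguity allowed by the construction), one reduces to comparing the ``outer'' maps. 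The key observation is that $h_i\circ q_i(z)=z^{\eta_i}$ is, near $\infty$, a power map, and the composition $z\mapsto z^{\eta_2/\eta_1}$ conjugates the model $G_1$ to the model $G_2$ up to a quasiconformal error supported on a set of finite logarithmic area (the pullback of the surgery region $W_i$, a union of two half-strips).

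The key steps, in order, are: (1) Normalize the two Weierstra{\ss} functions so that $\wp_2=L\circ\wp_1\circ L'$ for affine $L,L'$, matching critical values; this is where condition \eqref{anre} is used to place prime periods on the boundary rays of $\c_{\alpha_i}$. (2) Define a candidate conjugacy on the model level: set $\rho(z)=z^{\eta_2/\eta_1}$ (suitably branched), so that $h_2\circ q_2=\rho\circ(h_1\circ q_1)$ away from the spirals, and check that $\wp_2\circ\Phi_2\circ(h_2\circ q_2)$ and $\wp_1\circ\Phi_1\circ(h_1\circ q_1)$ are intertwined by $\rho$ on the exterior domain $V_{\alpha_i}$, modulo the affine maps from step (1); the surgery maps $\Phi_i$ differ from the identity only on $W_i$, so the intertwining is exact off $(h_i\circ q_i)^{-1}(W_i)$. (3) Glue: define $\psi = \Psi_2\circ \widetilde{\rho}\circ\Psi_1^{-1}$ and $\varphi$ the corresponding affine-type map on the target, where $\widetilde\rho$ is $\rho$ corrected quasiconformally across the spirals and across the discrete exceptional sets exactly as $\Phi_i$ and $\xi$ were built in Section \ref{thecon}; verify $\varphi\circ f_1=f_2\circ\psi$ by tracking the definition \eqref{defG} through $\Psi_i$. (4) Check quasiconformality of $\psi$ and $\varphi$: $\rho$ itself is conformal where defined, the correction is quasiconformal on a region of bounded geometry (again a union of half-strips, so finite logarithmic area, as in Proposition \ref{stwb}), and $\Psi_i$ are quasiconformal by construction.

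I expect the main obstacle to be step (2)–(3): making the intertwining relation genuinely global rather than just ``away from the surgery region.'' The maps $\Phi_i$ were only designed to kill the discontinuity of $G_i$ on $\Gamma_i$, not to be compatible with each other under $\rho$, so on the half-strips $S_i,\widehat S_i$ one cannot expect $\wp_2\circ\Phi_2$ and $\wp_1\circ\Phi_1$ to match after precomposing with $\rho$; one must instead define the conjugacy on the target by $\varphi := f_2\circ\psi\circ f_1^{-1}$ on a fundamental domain and argue that it extends to a globally well-defined quasiconformal map, using that $f_1,f_2$ have the same combinatorial structure (same number and multiplicity of poles along the two periodic curves $\beta_1,\beta_2$, same critical-value data). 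Concretely one would follow \cite[Theorem 3.2]{ac1} very closely: build $\psi$ first as a homeomorphism commuting with the relevant deck-type symmetries, push the Beltrami coefficient of $f_1$ forward, and invoke the measurable Riemann mapping theorem once more to straighten it, so that the straightened map is $f_2$ by the uniqueness part of the construction. The arithmetic of the exponents (that $\eta_2/\eta_1$ has positive real part, so $\rho$ is quasiconformal near $\infty$) and the finite-logarithmic-area bookkeeping are routine given Propositions \ref{stwb} and \ref{aes}; I would not grind through them here.
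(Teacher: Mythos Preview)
The paper does not give a proof of this proposition: it states only that the argument follows \cite[Theorem~3.2]{ac1} and leaves the details to the reader. Your outline is therefore aligned with the paper's intent, and your closing paragraph (build a topological conjugacy respecting the branching combinatorics and then straighten via the measurable Riemann mapping theorem) is indeed the shape of that argument.

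There is, however, a real gap in your step~(1). For distinct parameters $(\alpha_1,\tau_1)$ and $(\alpha_2,\tau_2)$ the lattices $\mathbb{Z}+\mathbb{Z}\tau_1$ and $\mathbb{Z}+\mathbb{Z}\tau_2$ are generically \emph{not} homothetic, so there are no affine maps $L,L'$ with $\wp_2=L\circ\wp_1\circ L'$; the $j$-invariants (equivalently, the cross-ratios of the finite critical values) differ, and no ``ambiguity allowed by the construction'' repairs this. What replaces the affine map is the \emph{real-linear} map $A:\c\to\c$ determined by $A(1)=1$ and $A(\tau_1)=\tau_2$. This $A$ is quasiconformal, carries the first lattice to the second, sends $\c_{\alpha_1}$ onto $\c_{\alpha_2}$, and takes half-periods to half-periods; hence the relation $\varphi\circ\wp_1=\wp_2\circ A$ determines a quasiconformal homeomorphism $\varphi$ of $\hc$ (it is a conformal--QC--conformal composite away from the four critical values, and those isolated points are removable for quasiconformal maps). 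With $A$ in hand, the domain conjugacy on $\c\setminus\Gamma_1$ is $\psi_0(z)=\bigl(A(z^{\eta_1})\bigr)^{1/\eta_2}$, not the pure power $z^{\eta_2/\eta_1}$ you propose; its quasiconformality is inherited directly from $A$, so the sign of $\re(\eta_2/\eta_1)$ is irrelevant. One then patches $\psi_0$ across the spirals by the same linear-interpolation devices used to build $\Phi_i$ and $\xi$, and finally sets $\psi=\Psi_2\circ\psi_0\circ\Psi_1^{-1}$ with $\varphi$ as above. With this correction your scheme goes through.
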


\bigskip
One may notice that the above constructions only give meromorphic functions in $\s_4$ of finite but non-zero order. To achieve zero or full Hausdorff dimension of escaping sets, we may need functions of zero or infinity order. We mention several examples in the following which suffices for our purposes.

\medskip
\noindent{\emph{Zero order.}} To have a zero order meromorphic function in $\s_4$, one can consider the one used in \cite[Section 4.1]{ac1}. Roughly speaking, the function $f$ is obtained by precomposing a suitably chosen Weierstra{\ss} elliptic function with an inverse branch of the function $\cosh$. The obtained function is meromorphic in certain slit plane. One can then use some basic properties of these two functions to show that it can be extended continuously across the slit and thus gives a meromorphic function in the plane. That this function belongs to $\s_4$ is clear since all singular values are just critical values of the Weierstra{\ss} elliptic function. It is also clear that $f$ has zero order by checking the counting function of poles, which can be computed explicitly. We omit details here.

\medskip
\noindent{\emph{Infinite order.}} As for infinite order functions in $\s_4$, consider the following function
$$f(z)=\wp(2\pi\cosh(z)),$$
where $\wp$ is a Weierstrass elliptic function with two periods $2\pi$ and $\tau$ such that $\tau$ is not a real multiple of $2\pi$. It follows from construction that $f$ has four critical values which are exactly the critical values of $\wp$ and has no asymptotic values. That $\rho(f)=\infty$ follows directly from \cite[Corollary 1.2]{edrei1}. This can also be obtained by checking the counting functions of poles.

\section{Estimate of the dimension}\label{section5}

We will need to estimate the Hausdorff dimension of the escaping sets for the Speiser functions in $\s_4$ constructed in the previous section, which then finishes the proof of Theorem \ref{main}. This will follow from the following estimate.

\begin{proposition}\label{mainpro}
Let $\rho\in [0,\infty)$. Then there exists a meromorphic function $f\in\s_4$ such that $\dim\I(f)=\frac{2\rho}{1+\rho}$. 
\end{proposition}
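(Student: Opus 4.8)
The plan is to build on the construction of Section \ref{thecon}: for a prescribed $\rho\in(0,\infty)$ we already have a function $f\in\s_4$ with only double poles, with $\rho(f)=\rho$, and with the precise asymptotic behaviour \eqref{localbehaviours} near each pole $w_0$, namely $f(w)\approx (C_1 w_0^{1-\eta}/(w-w_0))^2$. The key geometric input is that near a pole $w_0$ with $|w_0|=r$ large, the map $f$ sends a small disk $D(w_0,s)$ (with $s$ small depending on $r$) onto a neighbourhood of $\infty$ essentially like $w\mapsto (C_1 w_0^{1-\eta})^2/(w-w_0)^2$; using Proposition \ref{lengthestimate}, $|w_0^{1-\eta}|=\mathcal O(|w_0|\cdot|w_0|^{-\rho/2})=\mathcal O(r^{1-\rho/2})$, so the "radius of the univalent inverse branch" over a point of modulus $\approx t$ scales like $r^{1-\rho/2}/\sqrt t$. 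This is exactly the type of data McMullen's method (as in \cite{mcmullen11}) and its refinements for meromorphic maps (as in \cite{kotus2, ac1}) need.

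First I would record the relevant counting estimate: by Proposition \ref{order}, $n(r,f)=\mathcal O(r^\rho)$, and by the regularity of the construction the poles are roughly equidistributed on the spirals, so in an annulus $\{r\le|w|\le 2r\}$ there are $\asymp r^\rho$ poles, at mutual distances $\gtrsim r^{1-\rho}$ (up to logarithmic or bounded factors coming from the quasiconformal correction, which \eqref{iden} controls). Second, I would set up the standard Cantor-type construction inside the escaping set: starting from a pole $w_0$ of large modulus, one chooses an inverse branch of $f$ mapping a round disk around $\infty$ (more precisely around a far-away pole) back into $D(w_0, s_0)$; iterating, one produces a nested family of disks whose intersection lies in $\I(f)$, because each point is pushed to ever larger modulus. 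Third, I would estimate the "shrinking ratios" of this construction: passing from level $k$ to level $k+1$ multiplies scales by roughly $(\text{radius of branch})\sim R_k^{1-\rho/2} \cdot R_{k+1}^{-1/2}$ where $R_k$ is the modulus at level $k$; feeding these into the standard density/pressure estimate for Hausdorff dimension of such Cantor sets (e.g. the mass distribution principle, or McMullen's lemma on "uniformly expanding with bounded distortion" covers), one gets that the dimension equals the value $d$ solving a self-consistency relation that reduces, after taking $R_{k+1}$ optimally large, to $d=\tfrac{2\rho}{1+\rho}$. The matching upper bound comes from covering $\I(f)\cap\{|w|\le R\}$ by the preimages of disks around poles: the number of poles up to radius $R$ is $\asymp R^\rho$, each contributes a preimage disk of radius $\asymp R^{1-\rho/2}/\sqrt{R}=R^{(1-\rho)/2}$ mapped over the far region, and summing $(\text{radius})^d$ over all of them and over all iteration depths converges precisely when $d>\tfrac{2\rho}{1+\rho}$.

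Two endpoint cases need separate, easier arguments. For $\rho=0$ one uses the zero-order function described at the end of Section \ref{thecon}: there the pole-counting function grows subpolynomially, the inverse branches are "very contracting," and the escaping set has dimension $0$; concretely $\tfrac{2\rho}{1+\rho}=0$ and one checks the escaping set is contained in a countable union of preimage sets each of dimension $0$. For the (limiting) behaviour as $\rho\to\infty$, which corresponds to dimension $2$, one would invoke the infinite-order example $f(z)=\wp(2\pi\cosh z)$; but since the statement of Proposition \ref{mainpro} only asks for $\rho\in[0,\infty)$ giving $\dim\I(f)=\tfrac{2\rho}{1+\rho}\in[0,2)$, the infinite-order case is only needed to complete Theorem \ref{main} and can be handled by the separate remark that $\dim\I(\wp(2\pi\cosh z))=2$ via a direct McMullen-style estimate (the poles are so dense that the preimage disks essentially fill up space).

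The main obstacle I anticipate is controlling the \emph{distortion} of the inverse branches over the whole far-away region, not just near a single target pole: the asymptotic \eqref{localbehaviours} is pointwise near each pole, but to run the Cantor construction one needs uniform (bounded) distortion of $f^{-1}$ on a definite-size disk around $\infty$, uniformly over the pole chosen and over the iteration. This requires combining \eqref{localbehaviours} with the global structure $G=\wp\circ\Phi\circ h\circ q$ and the fact that $\Psi(z)=z+o(z)$ (so that $f$ and $G$ have comparable behaviour at large scales), plus a Koebe-type argument on the spiral coordinate $z\mapsto z^{\eta}$; managing the interaction between the spiral twisting and the elliptic function's periodicity, while keeping all constants uniform, is the technical heart of the estimate. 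Once that uniformity is in hand, the dimension computation is the by-now-standard two-sided estimate, and the final formula $\dim\I(f)=\tfrac{2\rho}{1+\rho}$ drops out of optimizing the free parameter (the modulus jump at each step) in the pressure equation.
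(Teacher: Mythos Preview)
Your proposal is correct and follows essentially the same route as the paper: the lower bound via McMullen's nested-covers lemma, with diameter and density estimates derived from \eqref{localbehaviours} together with Proposition \ref{lengthestimate} and the pole count \eqref{counting}, and the upper bound from the covering philosophy. Two remarks on where you diverge slightly. First, the paper does not carry out the upper-bound covering argument by hand but simply invokes \cite[Theorem~1.1]{bergweiler2}, whose hypotheses (finite order, no asymptotic values, pole multiplicities bounded) hold by construction; your covering sketch is in the right spirit but conflates the pole modulus $r=|a_j|$ with the target radius $R$ in the radius estimate (the preimage disk around $a_j$ has radius $\asymp |a_j|^{1-\rho/2}/\sqrt{R}$, not $R^{1-\rho/2}/\sqrt{R}$). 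Second, the ``main obstacle'' you anticipate---uniform distortion of inverse branches---is less serious than you suggest: since $f$ is Speiser with all singular values in some $D(0,R_0)$, every branch of $f^{-1}$ over $B(R)$ with $R>R_0$ is univalent on a simply connected domain, and Koebe's distortion and one-quarter theorems give the required uniform bounds directly (this is exactly how the paper obtains \eqref{bund} and \eqref{der}). There is no need to track the spiral twisting or the elliptic periodicity at this stage; those were already absorbed into the constant $C_1$ of \eqref{localbehaviours}.
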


Before we prove this, we first state the following direct consequence.

\begin{corollary}
For any $d\in [0,2)$, there exists $f\in\s_4$ such that $\dim\I(f)=d$.
\end{corollary}

Therefore, Theorem \ref{main} is proved except for the case that $d=2$, which will be discussed in the last of this section. By combining this with Proposition \ref{eqdo}, Theorem \ref{incm} follows.

The rest of proof will be devoted to the proof of the above Proposition \ref{mainpro}. We only give a sketch of proof, as the estimate of the Hausdorff dimension for escaping sets for our constructed functions will use the same idea of proof in \cite{ac1}: the upper bound follows from a result of Bergweiler and Kotus \cite{bergweiler2} while the lower bound uses the spherical version of a well known result of McMullen \cite{mcmullen11}.

\begin{proof}[Proof of Proposition \ref{mainpro}]

For any given $\rho\in [0,\infty)$, there exist $\alpha$ and $\mu$ satisfying 
$$\rho=\frac{\alpha^2+(\log|\tau|)^2}{\pi\alpha}.$$
The constructions in Section $3$ then gives us a Speiser function $f$ in $\s_4$ whose order is $\rho$. This function $f$ has only critical values and no asymptotic values. Moreover, all poles have the multiplicity $2$. 

\smallskip
\noindent{\emph{Upper bound}}. It is clear that the above function $f$ satisfies the conditions of \cite[Theorem 1.1]{bergweiler2}. So we have
\begin{equation}\label{ubes}
	\dim\I(f)\leq \frac{2\rho}{1+\rho}.
\end{equation}

\noindent{\emph{Lower bound}}. Suppose that $a_j$ are the poles of $f$, where $\cdots\leq |a_j|\leq |a_{j+1}|\leq\cdots$. Then it follows from \eqref{localbehaviours} and Proposition \ref{lengthestimate} that
$$f(z)\sim \left(\frac{b_j}{z-a_j}\right)^2\,~\,\text{as}\,~\,z\to z_j,$$
where
$$|b_j|\sim \left|a_j^{1-\eta}\right|\sim |a_j|^{1-\rho/2}.$$

Since $f$ is Speiser, we may take a large $R_0>0$ such that $D(0,R_0)$ contains all singular values of $f$. Now with $B(R)=\hc\setminus \overline{D}(0,R)$ and $R>R_0$, each component of $f^{-1}(B(R))$ is bounded, simply connected and contains one pole of $f$. Let $U_j$ be the component containing the pole $a_j$. By using Koebe's distortion and one quarter theorem, we may obtain
\begin{equation}\label{bund}
D\left(a_j,\frac{|b_j|}{4\sqrt{R}} \right)\subset U_j\subset D\left(a_j,\frac{2|b_j|}{\sqrt{R}} \right).
\end{equation}
See \cite[Section 4]{ac1} for more details. Moreover, if $g_j$ is an inverse branch of $f$ from some domain $\Omega$ to $U_j$, where $\Omega\subset B(R)$, then
\begin{equation}\label{der}
|g'_{j}(z)|\leq B_1\,\frac{|b_j|}{|z|^{3/2}}\,~\,\text{for}~\,z\in \Omega.
\end{equation}
Here $B_1>0$ is some constant.

We denote by $\diam (E)$ the diameter of the set $E$ in the plane and $\diam_{\chi} (E)$ the spherical diameter of $E$. The above estimates \eqref{bund} and \eqref{der} will give us a good control over the sizes of the pullbacks of $U_k$ under $f$ for large $k$. More precisely, for sufficiently large $k$, we have
$$\diam g_j(U_k)\leq\sup_{z\in U_k}|g'_j(z)|\diam U_k \leq B_1\frac{|b_j|}{|a_k|^{3/2}}\frac{|b_k|}{\sqrt{R}},$$
and if the indices $j_1,\,\dots,\,j_{\ell}$ are chosen such that $U_{j_k}$ is contained in $B(R)$, where $k=1,\dots,\ell$, we obtain, in terms of spherical metric,
\begin{equation}\label{54}
\diam_{\chi}\left(g_{j_1}\circ g_{j_2}\circ\cdots\circ g_{j_{\ell-1}}\right)(U_{j_{\ell}})\leq B_{1}^{\ell-1}\,\frac{32}{\sqrt{R}}\,\prod_{k=1}^{\ell}\,\frac{|b_{j_k}|}{|a_{j_k}|^{3/2}}.
\end{equation}

Now we consider the set
$$\I_{R}(f)=\left\{z\in\I(f):\,f^{n}(z)\in B(R)~\,\text{for all}\,~n\in\mathbb{N}\,  \right\}.$$
In other words, we are considering those escaping points whose iterates always stay in $B(R)$. Apparently, this is a subset of $\I(f)$. Let $E_l$ be the collection of all components $V$ of $f^{-l}(B(R))$ for which $f^{k}(V)\subset B(R)$ holds for $0\leq k\leq l-1$. We are going to estimate the sizes of components of $E_l$ by using \eqref{54}. For such a component $V$, by definition there exists $j_1,\,\dots,\,j_{l-1}$ such that
$$f^{k}(V)\subset U_{j_{k+1}}\,~\,\text{for}\,~\,k=0,1,\dots,l-1.$$
So, using \eqref{54} one can have, for some constant $B_2$ and $B_3$,
\begin{equation}\label{55}
\diam_{\chi}\left(V\right)\leq B_{1}^{\ell-1}\,\frac{32}{\sqrt{R}}\,\prod_{k=1}^{\ell}\,\frac{|b_{j_k}|}{|a_{j_k}|^{3/2}}=B_{1}^{\ell-1}\,\frac{32}{\sqrt{R}}\,\prod_{k=1}^{\ell}\,\frac{|B_2|^{\ell}}{|a_{j_k}|^{1/2+\rho/2}}\leq \left(\frac{B_3}{|R|^{1/2+\rho/2}}\right)^{\ell}.
\end{equation}
Put
\begin{equation}\label{dl}
d_{\ell}=\left(\frac{|B_3|}{|R|^{1/2+\rho/2}}\right)^{\ell}.
\end{equation}
In addition to the above defined term $d_{\ell}$, McMullen's lower bound estimate of Hausdorff dimension also involves a lower bound estimate for the density of $\overline{E}_{l+1}$ in $V$. Here $\overline{E}_{\ell}$ represents the union of all elements of $E_{\ell}$. We will also define $E=\medcap_{\ell}\overline{E}_{\ell}$. For this purpose, we consider an annulus $A(s):=\{z: s<|z|<2s\}$ which is contained in $B(R)$, i.e., $s> R$. Then the number of $U_j$ contained in $A(s)$ is $B_4(n(2s,f)-n(s,f))=B_5 s^{\rho}$, where $B_4$ and $B_5$ are some positive numbers. So we have
$$\diam U_j\geq\frac{|b_j|}{2\sqrt{R}}=B_6\frac{|a_j|^{1-\rho/2}}{2\sqrt{R}}\geq \frac{B_7}{s^{\rho/2-1/2}}.$$
$B_6,\,B_7$ are constants. Therefore,
$$\dens\left(\overline{E}_1, A(s)\right)=\frac{\area(\overline{E}_1 \cap A(s))}{\area A(s)}\geq \frac{B_5 s^{\rho}\,\pi(B_7/s^{\rho/2-1/2})^2}{3\pi s^2}=B_8\, s,$$
where $B_8>0$ is some constant. By repeating the argument used in \cite[Section 4]{ac1}, which we do not repeat here, we have the following estimate, for some constant $B_9>0$,
\begin{equation}\label{del}
\dens_{\chi}\left(\overline{E}_{\ell+1}, V\right)\geq \frac{B_9}{R}=:\Delta_{\ell}.\end{equation}
Now we can apply McMullen's result by using \eqref{dl} and \eqref{del} to obtain
$$\dim E\geq 2-\limsup_{\ell\to\infty}\frac{\sum_{j=1}^{\ell+1}|\log \Delta_j|}{|\log d_{\ell}|}\geq 2-\frac{\log B_9 - \log R}{\log B_3 - (\frac{1}{2}+\frac{\rho}{2})\log R}.$$
With $R\to\infty$, we have $\dim E\geq \frac{2\rho}{1+\rho}$. The next step is to use this estimate  to  give the estimate for the Hausdorff dimension of the escaping set by taking a sequence $(R_k)$ which tends to infinity increasingly and consider those points whose $k$-iterate lies in in $B(R_k)$. This goes in the same way as in the aforementioned reference and so we omit details. We conclude directly that
$$\dim\I(f)\geq \frac{2\rho}{1+\rho}.$$
Combine with the upper bound discussed above, we have finished the proof.
\end{proof}

To complete the proof of Theorem \ref{main}, we still need to find a function in $\s_4$ with a full dimensional escaping set. For this purpose, we put
$$f_1(z)=\wp_1(2\pi\cosh(z))$$
This function belongs to $\s_4$, as mentioned before.

\begin{proposition}\label{fulldimension}
	$$\dim\I(f_1)=2.$$
\end{proposition}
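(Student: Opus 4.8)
The function $f_1(z)=\wp_1(2\pi\cosh z)$ has infinite order, so the upper-bound argument based on \cite[Theorem 1.1]{bergweiler2} yields no nontrivial restriction: the bound $\tfrac{2\rho}{1+\rho}$ degenerates to $2$ as $\rho\to\infty$. Thus the whole content of Proposition \ref{fulldimension} is the \emph{lower} bound $\dim\I(f_1)\geq 2$, and the strategy is to run the McMullen-type density argument from the proof of Proposition \ref{mainpro}, but with the exponential-type growth of $\cosh$ replacing the polynomial (spiral) growth of $z^\eta$. The key structural fact is that $\cosh$ maps a horizontal half-strip $\{x+iy: x>x_0,\ |y|<\pi/2\}$ (say) conformally onto a right half-plane, so $f_1$ restricted to such a strip looks, up to the bounded distortion coming from $\wp_1$ and from $\cosh$ on strips, like the exponential map composed with a doubly periodic function; the preimages of a neighbourhood of $\infty$ are then roughly \emph{vertical periodic strings} of round sets clustering on vertical lines $\{\re z = t\}$ with $t\to\infty$, and the number of poles in $\{t<\re z<t+1\}$ grows like $e^{ct}$ rather than polynomially.

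First I would set up the covering structure: fix $R_0$ so that $D(0,R_0)$ contains all four critical values of $\wp_1$ (there are no asymptotic values), and for $R>R_0$ describe the components of $f_1^{-1}(B(R))$, where $B(R)=\hc\setminus\overline D(0,R)$. Each such component $U_j$ is a bounded simply connected neighbourhood of a (double) pole $a_j$ of $f_1$, the poles being the $\cosh$-preimages of the pole lattice of $2\pi\wp_1$. Using $f_1(z)\sim (b_j/(z-a_j))^2$ near $a_j$, with $|b_j|$ governed by the derivative of $\cosh$ at $a_j$ — concretely $|b_j|\asymp 1/|\sinh a_j|\asymp e^{-|\re a_j|}$ in the relevant region — Koebe distortion gives, exactly as in \eqref{bund} and \eqref{der}, that $U_j$ is a round disk of radius $\asymp |b_j|/\sqrt R$ and that inverse branches $g_j$ of $f_1$ satisfy $|g_j'(z)|\le B_1|b_j|/|z|^{3/2}$ on $B(R)$. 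The diameter estimate \eqref{54} for compositions then carries over verbatim.

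Next comes the density estimate, which is where the infinite order helps rather than hurts. Consider a ``box'' $Q(t)=\{x+iy: t<x<t+1,\ |y|<\pi\}$ inside the strip, with $t$ large; its $\cosh$-image covers an annulus $A(s)$ with $s\asymp e^{t}$, and the number of poles $a_j$ in $Q(t)$ is $\asymp s = e^t$ (comparable to $n(2s,f_1)-n(s,f_1)$). Each such $U_j$ has diameter $\gtrsim |b_j|/\sqrt R \asymp e^{-t}/\sqrt R$, so the total area of $\overline E_1\cap Q(t)$ is $\gtrsim e^t\cdot(e^{-t}/\sqrt R)^2 = 1/(R\, e^{t})$ while $\area Q(t)\asymp 1$; after passing to the spherical metric near $\infty$ (where the spherical-to-Euclidean ratio is $\asymp 1/|z|^2$, which is essentially constant on $Q(t)$) one gets a density bound $\dens_\chi(\overline E_{\ell+1},V)\ge \Delta_\ell$ with $\Delta_\ell$ bounded below by a positive constant \emph{independent of the level $\ell$ and, crucially, of $R$} — contrast this with \eqref{del} where $\Delta_\ell\asymp 1/R$. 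Meanwhile the diameter bound still reads $d_\ell\le (B_3/R^{c})^{\ell}$ for some $c>0$, so $|\log d_\ell|\ge \ell\,c\log R - \ell\log B_3\to\infty$ as $R\to\infty$ for each fixed $\ell$. Feeding $(d_\ell,\Delta_\ell)$ into McMullen's lemma gives
$$\dim E \ge 2 - \limsup_{\ell\to\infty}\frac{\sum_{j=1}^{\ell+1}|\log\Delta_j|}{|\log d_\ell|} \ge 2 - \frac{C}{c\log R - \log B_3},$$
and letting $R\to\infty$ (together with the standard diagonal step passing to an increasing sequence $R_k\to\infty$ and the points whose $k$-th iterate lies in $B(R_k)$, exactly as in \cite[Section 4]{ac1}) yields $\dim\I(f_1)\ge 2$, hence $=2$.

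\textbf{Main obstacle.} The delicate point is not the combinatorics but the \emph{uniformity of the constants in the strip geometry}: one must check that the distortion coming from $\cosh$ on the relevant half-strips, and from $\wp_1$ near its poles, is genuinely bounded independently of how far out along the strip one is, so that the Koebe-type estimates \eqref{bund}--\eqref{der} and the density lower bound hold with constants not depending on $t$ or $R$. In particular one needs $|b_j|\asymp e^{-\re a_j}$ with uniform implied constants, and one needs the round-disk description of $U_j$ to survive the composition $\wp_1\circ(2\pi\cosh)$ uniformly; this is exactly the kind of bounded-geometry bookkeeping done in \cite[Section 4]{ac1}, and I would invoke it by reduction to that reference rather than redo it. A secondary point is to make sure the boxes $Q(t)$ and their images actually lie in $B(R)$ for all large $t$ once $R$ is fixed, and that the chosen half-strip avoids the finitely many exceptional $\cosh$-preimages where the $\wp_1$-lattice is hit too close to a critical value — both are handled by taking $t$ (equivalently $s\asymp e^t$) large enough.
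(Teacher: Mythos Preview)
Your overall strategy---run the McMullen density argument of Proposition~\ref{mainpro} with the exponential geometry of $\cosh$ in place of the spiral map---is exactly what the paper intends (it simply refers to \cite[Section~4.3]{ac1}). The local model $|b_j|\asymp 1/|\sinh a_j|\asymp e^{-|\re a_j|}$ is correct. However, several of the quantitative steps are off, and as written the argument does not close.

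First, the pole count in $Q(t)$ is $\asymp e^{2t}$, not $e^{t}$: the set $\{\cosh a_j\}$ is (up to scaling) the full two--dimensional lattice of $\wp_1$, so the number of lattice points in an annulus of radius $\asymp e^{t}$ is $\asymp e^{2t}$. With this correction your area computation becomes $e^{2t}\cdot(e^{-t}/\sqrt{R})^2=1/R$, so the Euclidean density in $Q(t)$ is $\asymp 1/R$, \emph{independent of $t$} but certainly not independent of $R$. (Your own formula gave $1/(Re^{t})$, which tends to $0$ with $t$ and would have killed the McMullen bound; the appeal to the spherical metric does not rescue this, since density is a ratio of areas and the spherical factor cancels.) Second, your diameter bound is too weak: from $|b_{j_k}|\asymp e^{-\re a_{j_k}}$ and $\re a_{j_k}\gtrsim R$ one gets
\[
\prod_{k=1}^{\ell}\frac{|b_{j_k}|}{|a_{j_k}|^{3/2}}\ \le\ \bigl(C\,e^{-R}\bigr)^{\ell},
\]
i.e.\ $|\log d_\ell|\gtrsim \ell R$, not $\ell\,c\log R$. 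It is precisely this exponential improvement in $d_\ell$ (not any improvement in $\Delta_\ell$) that makes the infinite--order case work: with $\Delta_\ell\asymp 1/R$ one has
\[
\dim E\ \ge\ 2-\limsup_{\ell\to\infty}\frac{(\ell+1)\,\log R+O(1)}{\ell\,R+O(\ell)}\ =\ 2-\frac{\log R}{R},
\]
and letting $R\to\infty$ (with the usual diagonalisation) yields $\dim\I(f_1)=2$. So the plan is right, but you should replace the ``density independent of $R$'' claim by the correct pair $\Delta_\ell\asymp 1/R$ and $d_\ell\lesssim e^{-c\ell R}$.
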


The proof of this result is in the same manner as we did in \cite[Section 4.3]{ac1}. We leave details to interested readers.

\bigskip
\emph{Centre for Mathematical Sciences}

\emph{Lund University}

\emph{Box 118, 22 100 Lund, Sweden}
 
\medskip
\emph{magnus.aspenberg@math.lth.se}

\smallskip
\emph{weiwei.cui@math.lth.se}


\begin{thebibliography}{BFRG15}

\bibitem[AC21]{ac1}
M.~Aspenberg and W.~Cui.
\newblock Hausdorff dimension of escaping sets of meromorphic functions.
\newblock {\em Trans. Amer. Math. Soc.}, 374(9):6145--6178, 2021.

\bibitem[Ahl06]{ahlfors8}
L.~V. Ahlfors.
\newblock {\em Lectures on Quasiconformal Mappings}, volume~38 of {\em
  University Lecture Series}.
\newblock American Mathematical Society, Providence, RI, second edition, 2006.

\bibitem[Bar08]{baranski1}
K.~Bara\'nski.
\newblock Hausdorff dimension of hairs and ends for entire maps of finite
  order.
\newblock {\em Math. Proc. Cambridge Philo. Soc.}, 145(3):719--737, 2008.

\bibitem[BC21]{bergweiler-cui}
W.~Bergweiler and W.~Cui.
\newblock The Hausdorff dimension of Julia sets of meromorphic functions in the Speiser class.
\newblock Preprint arXiv: 2105.00938, 2021.

\bibitem[BE95]{bergweiler18}
W.~Bergweiler and A.~Eremenko.
\newblock On the singularities of the inverse to a meromorphic function of
  finite order.
\newblock {\em Rev. Mat. Iberoam}, 11(2):355--373, 1995.

\bibitem[BE17]{bergweiler23}
W.~Bergweiler and A.~Eremenko.
\newblock On the {B}ank-{L}aine conjecture.
\newblock {\em J. Eur. Math. Soc. (JEMS)}, 19(6):1899--1909, 2017.

\bibitem[Ber93]{bergweiler1}
W.~Bergweiler.
\newblock Iteration of meromorphic functions.
\newblock {\em Bull. Amer. Math. Soc.}, 29(2):151--188, 1993.

\bibitem[BF14]{branner3}
B.~Branner and N.~Fagella.
\newblock {\em Quasiconformal Surgery in Holomorphic Dynamics}.
\newblock Cambridge studies in advanced mathematics 141. Cambridge University
  Press, 2014.

\bibitem[BFRG15]{bergweiler6}
W.~Bergweiler, N.~Fagella, and L.~Rempe-Gillen.
\newblock Hyperbolic entire functions with bounded {Fatou} components.
\newblock {\em Comment. Math. Helv.}, 90(4):799--829, 2015.

\bibitem[Bis15]{bishop1}
C.~J. Bishop.
\newblock Constructing entire functions by quasiconformal folding.
\newblock {\em Acta Math.}, 214(1):1--60, 2015.

\bibitem[Bis17]{bishop5}
C.~J. Bishop.
\newblock Models for the {S}peiser class.
\newblock {\em Proc. Lond. Math. Soc. (3)}, 114(5):765--797, 2017.

\bibitem[BK12]{bergweiler2}
W.~Bergweiler and J.~Kotus.
\newblock On the {H}ausdorff dimension of the escaping set of certain
  meromorphic functions.
\newblock {\em Trans. Amer. Math. Soc.}, 364(10):5369--5394, 2012.

\bibitem[BKS09]{bergweiler10}
W.~Bergweiler, B.~Karpi\'nska, and G.~M. Stallard.
\newblock The growth rate of an entire function and the {H}ausdorff dimension
  of its {J}ulia set.
\newblock {\em J. London Math. Soc. (2)}, 80(3):680--698, 2009.

\bibitem[Cui21a]{cuiwei3}
W.~Cui.
\newblock Entire functions arising from trees.
\newblock {\em Sci. China Math.}, 64(10): 2231--2248, 2021.

\bibitem[Cui21b]{cuiwei2}
W.~Cui.
\newblock Hausdorff dimension of escaping sets of {N}evanlinna functions.
\newblock {\em Int. Math. Res. Not. IMRN}, 2021(15):11767--11781.

\bibitem[EF64]{edrei1}
A.~Edrei and W.~H.~J. Fuchs.
\newblock On the zeros of {$f(g(z))$} where {$f$} and {$g$} are entire
  functions.
\newblock {\em J. Analyse Math.}, 12:243--255, 1964.

\bibitem[EL92]{eremenko2}
A.~Eremenko and M.~Lyubich.
\newblock Dynamical properties of some classes of entire functions.
\newblock {\em Ann. Inst. Fourier (Grenoble)}, 42(4):989--1020, 1992.

\bibitem[Elf34]{elfving1}
G.~Elfving.
\newblock {\"Uber} eine {Klasse} von {Riemannschen Fl\"achen} und ihre
  {Uniformiserung}.
\newblock {\em Acta Soc. Sci. Fenn. (N. S.)}, 2(3):1--60, 1934.

\bibitem[GK86]{goldberg4}
L.~R. Goldberg and L.~Keen.
\newblock A finiteness theorem for a dynamical class of entire functions.
\newblock {\em Ergodic Theory Dynam. Systems}, 6(2):183--192, 1986.

\bibitem[GK18]{kotus2}
P.~Galazka and J.~Kotus.
\newblock Escaping points and escaping parameters for singly periodic
  meromorphic maps: {H}ausdorff dimensions outlook.
\newblock {\em Complex Var. Elliptic Equ.}, 63(4):547--568, 2018.

\bibitem[GO08]{goldbergmero}
A.~A. Goldberg and I.~V. Ostrovskii.
\newblock {\em Value Distribution of Meromorphic Functions}, volume 236 of {\em
  Translations of Mathematical Monographs}.
\newblock American Mathematical Society, Providence, RI, 2008.

\bibitem[Hay64]{hayman1}
W.~K. Hayman.
\newblock {\em Meromorphic Functions}.
\newblock Oxford Mathematical Monographs. Clarendon Press, Oxford, 1964.

\bibitem[LV73]{lehto1}
O.~Lehto and K.~I. Virtanen.
\newblock {\em Quasiconformal Mappings in the Plane}, volume 126.
\newblock Springer-Verlag, New York-Heidelberg, second edition, 1973.

\bibitem[McM87]{mcmullen11}
C.~T. McMullen.
\newblock Area and{ Hausdorff dimension of Julia} sets of entire functions.
\newblock {\em Trans. Amer. Math. Soc.}, 300(1):329--342, 1987.

\bibitem[Nev70]{nevanlinna}
R.~Nevanlinna.
\newblock {\em Analytic Functions}.
\newblock Translated from the second German edition by Phillip Emig. Die
  Grundlehren der mathematischen Wissenschaften, Band 162. Springer-Verlag, New
  York-Berlin, 1970.

\bibitem[RS10]{rempe11}
L.~Rempe and G.~M. Stallard.
\newblock Hausdorff dimensions of escaping sets of transcendental entire
  functions.
\newblock {\em Proc. Amer. Math. Soc.}, 138(5):1657--1665, 2010.

\bibitem[Sch07]{schubert}
H.~Schubert.
\newblock {\"Uber} die {Hausdorff-Dimension der Juliamenge von Funktionen
  endlicher Ordnung}.
\newblock {\em Dissertation, University of Kiel}, 2007.

\bibitem[Tei37]{teichmuller2}
O.~Teichm\"uller.
\newblock Eine {U}mkehrung des zweiten {H}auptsatzes der
  {W}ertverteilungslehre.
\newblock {\em Deutsche Math.}, 2:96--107, 1937.

\end{thebibliography}
\end{document}